\title{The Bou\'e--Dupuis formula 
and the exponential hypercontractivity in the Gaussian space}
\author{Yuu Hariya\thanks{{\it Corresponding author. E-mail:~hariya@tohoku.ac.jp}} \thanks{Mathematical Institute, Tohoku University, Aoba-ku, Sendai 980-8578, Japan. } \and Sou Watanabe\thanks{Yamagata Prefectural Sagae High School, Sagae 991-8511, Japan.}}
\date{\empty}
\numberwithin{equation}{section}
\theoremstyle{plain}
\newtheorem{thm}{Theorem}[section]
\newtheorem{prop}{Proposition}[section]
\newtheorem{cor}{Corollary}[section]
\newtheorem{lem}{Lemma}[section]
\theoremstyle{definition}
\theoremstyle{remark}
\newtheorem{rem}{Remark}[section]
\DeclareMathOperator*{\esssup}{ess\,sup}
\begin{document}

\def\N {\mathbb{N}}
\def\R {\mathbb{R}}
\def\Q {\mathbb{Q}}

\def\calF {\mathcal{F}}

\def\kp {\kappa}

\def\ind {\boldsymbol{1}}

\def\al {\alpha }
\def\la {\lambda }
\def\ve {\varepsilon}
\def\Om {\Omega}

\def\v {v}

\def\ga {\gamma }

\def\W {\mathbb{W}}
\def\H {\mathbb{H}}
\def\A {\mathcal{V}}

\newcommand\ND{\newcommand}
\newcommand\RD{\renewcommand}

\ND\lref[1]{Lemma~\ref{#1}}
\ND\tref[1]{Theorem~\ref{#1}}
\ND\pref[1]{Proposition~\ref{#1}}
\ND\sref[1]{Section~\ref{#1}}
\ND\ssref[1]{Subsection~\ref{#1}}
\ND\aref[1]{Appendix~\ref{#1}}
\ND\rref[1]{Remark~\ref{#1}} 
\ND\cref[1]{Corollary~\ref{#1}}
\ND\eref[1]{Example~\ref{#1}}
\ND\fref[1]{Fig.\ {#1} }
\ND\lsref[1]{Lemmas~\ref{#1}}
\ND\tsref[1]{Theorems~\ref{#1}}
\ND\dref[1]{Definition~\ref{#1}}
\ND\psref[1]{Propositions~\ref{#1}}
\ND\rsref[1]{Remarks~\ref{#1}}
\ND\sssref[1]{Subsections~\ref{#1}}

\ND\pr{\mathbb{P}}
\ND\ex{\mathbb{E}}
\ND\br{B}
\ND\wm{\mathcal{W}}

\ND\be[1]{R^{(#1)}}

\ND\E[1]{\mathcal{E}^{#1}}
\ND\no[2]{\|{#1}\|_{#2}}
\ND\tr[2]{T^{#1}_{#2}}
\ND\si{\mathcal{S}}
\ND\lhs[1]{\log\ex\!\left[e^{{#1}(\br)}\right]}
\ND\inner[2]{({#1},{#2})_{\H }}
\ND\cpl[2]{\langle {#1},{#2}\rangle}
\ND\nt[2]{\left\|{#1}\right\|_{#2}}

\ND\sX{\mathscr{X}}
\ND\sB{\mathscr{B}}
\ND\cP{\mathcal{P}}
\ND\cB{\mathcal{B}}
\ND\cC{\mathcal{C}}
\ND\cS{\mathcal{S}}
\ND\D{d}
\ND\ou{Q}
\ND\gss[1]{\gamma_{#1}}

\def\thefootnote{{}}

\maketitle 

\begin{abstract}
This paper concerns a variational representation formula 
for Wiener functionals. 
Let $\br =\{ \br _{t}\} _{t\ge 0}$ be a standard $d$-dimensional 
Brownian motion. 
Bou\'e and Dupuis (1998) showed that, for any  
bounded measurable functional $F(\br )$ of $\br $ up to time $1$, 
the expectation $\ex \!\left[ e^{F(\br )}\right] $ admits a  
variational representation in terms of drifted Brownian motions. 
In this paper, with a slight modification of insightful 
reasoning by Lehec (2013) allowing also $F(\br )$ to 
be a functional of $\br $ over the whole time interval, we prove that 
the Bou\'e--Dupuis formula 
holds true provided that 
both $e^{F(\br )}$ and $F(\br )$ are integrable, relaxing 
conditions in earlier works. We also show that the formula 
implies the exponential hypercontractivity of the Ornstein--Uhlenbeck 
semigroup in $\R ^{d}$, and hence, 
due to their equivalence, implies the logarithmic Sobolev 
inequality in the $d$-dimensional Gaussian space.
\footnote{{\itshape Keywords and Phrases}. {Wiener functional}; {variational representation}; {Ornstein--Uhlenbeck semigroup}; {exponential hypercontractivity}.}
\footnote{
2020 {\itshape Mathematical Subject Classification}. Primary {60H30}; Secondary {60J65}, {60E15}.}
\end{abstract}

\section{Introduction}\label{;intro}
Given a positive integer $d$, let $\br =\{ \br _{t}\} _{t\ge 0}$ be 
a standard $d$-dimensional Brownian motion. 
In \cite{bd}, Bou\'e and Dupuis established the following formula  
for any bounded measurable function $F$  
mapping $C([0,1];\R ^{d})$ into $\R $: 
\begin{align}\label{;vr0}
 \log \ex \!\left[ 
 e^{F(\br )}
 \right] 
 =\sup _{\v }\ex \!\left[ 
 F\left( \br +\int _{0}^{\cdot }\v _{t}\,dt\right) 
 -\frac{1}{2}\int _{0}^{1}|\v _{t}|^{2}\,dt
 \right] . 
\end{align}
Here the supremum runs over all progressively measurable 
processes $\v$ with respect to the augmented natural 
filtration of $\br $ such that $\int _{0}^{1}|\v _{t}|^{2}\,dt$ 
is integrable. In \cite{bd}, formula \eqref{;vr0} 
was proven useful 
in deriving various large deviation asymptotics such as Laplace 
principles for small noise diffusions driven by Brownian motion. 
These results have been extended by 
Budhiraja and Dupuis \cite{bud} to Hilbert space-valued 
Brownian motions, and later generalized by Zhang \cite{zha} to 
abstract Wiener spaces. In Bou\'e--Dupuis \cite{bd2}, 
formula \eqref{;vr0} is also applied to risk-sensitive stochastic 
control problems. Recently, the formula has been used 
effectively by Barashkov and Gubinelli \cite{bg} in the study of the 
$\Phi ^{4}_{3}$ Gibbs measure in the quantum field theory (in fact, 
they employ an extended formula by \"Ust\"unel \cite{ust} to a 
class of unbounded functions $F$; see \rref{;rtmain1}\thetag{3}).
Their idea is exploited further by Chandra, Gunaratnam 
and Weber in \cite{cgw}.

One of the objectives of this paper is to show that 
the boundedness imposed on the functions $F$ is 
removable when both $e^{F(\br )}$ and $F(\br )$ are integrable. 
We do this by slightly modifying reasoning by Lehec \cite{leh} 
based on deep understanding of the Gaussian relative 
entropy, which also allows $F(\br )$ to be a functional of 
$\br $ over the whole time interval. 

In order to state the result precisely, we prepare some of notation. 
We denote by $\pr $ the probability measure of the 
probability space on which the Brownian motion $\br $ is 
defined. We set 
\begin{align*}
 \calF ^{\br }_{t}:=\sigma (\br _{s},0\le s\le t)\vee \mathcal{N}, \quad 
 t\ge 0, 
\end{align*}
the filtration generated by $\br $ and augmented by the set 
$\mathcal{N}$ of all $\pr $-null events. Let $\v =\{ \v _{t}\} _{t\ge 0}$ 
be a $d$-dimensional process defined on the same probability 
space as $\br $. We call $\v $ a {\it drift} if it is 
$\{ \calF ^{\br }_{t}\} $-progressively measurable. We denote by 
$\A $ the set of drifts $\v $ satisfying 
\begin{align}\label{;intv}
 \ex \!\left[ \int _{0}^{\infty }|\v _{t}|^{2}\,dt\right] <\infty . 
\end{align}
Here and in what follows, $\ex $ denotes the expectation with 
respect to $\pr $ and $|x|$ stands for the Euclidean norm of 
$x\in \R ^{d}$. 

Let $\W =C([0,\infty );\R ^{d})$ be the space 
of $\R ^{d}$-valued continuous functions on $[0,\infty )$ vanishing 
at the origin, endowed with the topology of uniform convergence 
on compact subsets of $[0,\infty )$. 
We denote by $\mathcal{B}(\W )$ the associated Borel 
$\sigma $-field and by $\wm $ the Wiener measure 
on $(\W ,\mathcal{B}(\W ))$. 
Let $F:\W \to \R $ be measurable. We assume: 
\begin{align*}
 \thetag{A1}\ \int _{\W }e^{F}\,d\wm <\infty ; && 
 \thetag{A2}\ \int _{\W }F_{-}\,d\wm <\infty ,
\end{align*}
where we set $F_{-}(w):=\max \{ -F(w),0\} ,\,w\in \W $. 
The main result of the paper is stated as 
follows: 

\begin{thm}\label{;tmain1}
 Let $F:\W \to \R $ be a measurable function satisfying 
 \thetag{A1} and \thetag{A2}. Then it holds that 
 \begin{align}\label{;vr1}
  \log \ex \!\left[ 
 e^{F(\br )}
 \right] 
 =\sup _{\v \in \A }\ex \!\left[ 
 F\left( \br +\int _{0}^{\cdot }\v _{t}\,dt\right) 
 -\frac{1}{2}\int _{0}^{\infty }|\v _{t}|^{2}\,dt
 \right] . 
 \end{align}
\end{thm}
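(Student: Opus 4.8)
The plan is to recast the identity \eqref{;vr1} as an instance of the Gibbs variational principle for the Wiener measure, in which the relative entropy term is represented through drifts in the spirit of F\"ollmer and Lehec \cite{leh}. Writing $H(\nu\,|\,\wm)$ for the relative entropy of a probability measure $\nu$ on $(\W,\mathcal{B}(\W))$ with respect to $\wm$, the two ingredients I would assemble are: (a) the classical Gibbs duality $\log\int_{\W}e^{F}\,d\wm = \sup_{\nu\ll\wm}\bigl(\int_{\W}F\,d\nu - H(\nu\,|\,\wm)\bigr)$, formally saturated by the tilted measure $d\nu^{*} = e^{F}\,d\wm / \int_{\W}e^{F}\,d\wm$ in the sense that $\int_{\W}F\,d\nu^{*} - H(\nu^{*}\,|\,\wm) = \log\int_{\W}e^{F}\,d\wm$; and (b) the drift representation of entropy, namely that for a drift $\v\in\A$ the law $\nu_{\v}$ of the shifted path $\br+\int_{0}^{\cdot}\v_{t}\,dt$ satisfies $H(\nu_{\v}\,|\,\wm)\le\frac{1}{2}\ex[\int_{0}^{\infty}|\v_{t}|^{2}\,dt]$, while conversely every $\nu$ of finite entropy arises in this way from some $\v\in\A$ realizing equality. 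Granting these, \eqref{;vr1} would follow by chaining the two suprema, since $\int_{\W}F\,d\nu_{\v} = \ex[F(\br+\int_{0}^{\cdot}\v_{t}\,dt)]$ for each drift.

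For the inequality $\ge$ in \eqref{;vr1} I would fix $\v\in\A$ and run Girsanov's theorem: the process $\br+\int_{0}^{\cdot}\v_{t}\,dt$ is a Brownian motion under the measure with density $\exp(-\int_{0}^{\infty}\v_{t}\cdot d\br_{t} - \frac{1}{2}\int_{0}^{\infty}|\v_{t}|^{2}\,dt)$, whence $\nu_{\v}\ll\wm$ and the entropy bound in (b) holds. Combining this bound with the Gibbs inequality $\int_{\W}F\,d\nu_{\v} - H(\nu_{\v}\,|\,\wm)\le\log\int_{\W}e^{F}\,d\wm$ yields the desired inequality after taking the supremum over $\v\in\A$. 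Here \thetag{A2} is what guarantees that $\ex[F(\br+\int_{0}^{\cdot}\v_{t}\,dt)]$ is well defined in $[-\infty,\infty)$, so that the left-hand member of each estimate makes sense.

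For the reverse inequality $\le$, which is the substantive half, I would realize the optimizer. The natural candidate is the F\"ollmer-type drift $\v^{*}$ associated with $\nu^{*}$, for which the entropy identity $H(\nu^{*}\,|\,\wm)=\frac{1}{2}\ex[\int_{0}^{\infty}|\v^{*}_{t}|^{2}\,dt]$ holds, so that $\ex[F(\br+\int_{0}^{\cdot}\v^{*}_{t}\,dt)] - \frac{1}{2}\ex[\int_{0}^{\infty}|\v^{*}_{t}|^{2}\,dt] = \int_{\W}F\,d\nu^{*} - H(\nu^{*}\,|\,\wm) = \log\int_{\W}e^{F}\,d\wm$. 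Since $F$ is neither bounded nor of finite time horizon, I would not apply this directly but rather approximate: truncate to $F_{n}:=F\vee(-n)$, further localized so as to depend on the path only up to time $n$, establish the formula for each bounded, time-localized $F_{n}$ (where Lehec's finite-horizon construction and the Bou\'e--Dupuis formula \eqref{;vr0} apply), and then pass to the limit $n\to\infty$. Monotone convergence together with \thetag{A1} controls the exponential moments from above and \thetag{A2} controls $\int_{\W}F_{-}\,d\wm$ from below, forcing convergence of both sides.

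The main obstacle I anticipate is precisely this limiting step over the infinite time interval: one must show that the near-optimal drifts for $F_{n}$ can be chosen with uniformly controlled energy $\ex[\int_{0}^{\infty}|\v_{t}|^{2}\,dt]$ and that no energy escapes to $t=\infty$, so that a limiting drift in $\A$ survives and remains admissible. This is where the weaker hypotheses \thetag{A1}--\thetag{A2}, in place of boundedness, must be leveraged carefully, controlling the entropy of the tilted measures uniformly and invoking lower semicontinuity of $H(\cdot\,|\,\wm)$ and of the energy functional to avoid losing mass in the supremum. Everything else---Girsanov's theorem, the Gibbs duality, and the finite-horizon F\"ollmer representation---I regard as standard once the approximation scheme is in place.
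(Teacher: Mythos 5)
Your overall framework---Gibbs duality for the relative entropy, Lehec's inequality $H(\nu_{\v}\mid\wm)\le\frac12\nt{\v}{\A}^{2}$ for the lower bound, and the converse realization of finite-entropy measures by drifts for the upper bound---is exactly the paper's framework, and your lower-bound argument is essentially the one given there (one small correction: it is \thetag{A1}, not \thetag{A2}, that makes $\ex\bigl[F(\br^{\v})\bigr]$ well defined in $[-\infty,\infty)$, since it forces $\ex\bigl[F_{+}(\br^{\v})\bigr]<\infty$; see \rref{;rtmain1}\thetag{1} and \eqref{;fp}).

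The gap is in the upper bound, and it is precisely at the step you flag as the ``main obstacle.'' Truncating from below by $F_{n}:=F\vee(-n)$ and applying the known bounded-below case gives $\log\ex\bigl[e^{F_{n}(\br)}\bigr]\le\sup_{\v}\{\cdots\}$ for each $n$, but letting $n\to\infty$ only yields $\log\ex\bigl[e^{F(\br)}\bigr]\le\inf_{n}\sup_{\v}\{\cdots\}$, and the interchange $\inf_{n}\sup_{\v}\le\sup_{\v}\inf_{n}$ that you need goes in the wrong direction. Your proposed repair---uniform energy bounds on near-optimal drifts plus lower semicontinuity---runs into the problem that under \thetag{A1} alone the tilted measure $d\nu^{*}\propto e^{F}d\wm$ may have \emph{infinite} entropy (when $Fe^{F}\notin L^{1}(\wm)$), so there is no optimizer to converge to and no a priori uniform control on the energies of near-optimizers; nothing in \thetag{A1}--\thetag{A2} prevents them from blowing up. The paper avoids this entirely by truncating in the \emph{opposite} direction: it proves the upper bound first for $F$ bounded from above (where $\sup_{\v}$ for $F\wedge M$ is trivially dominated by $\sup_{\v}$ for $F$, so monotone convergence in $M$ causes no minimax issue), and the real work is to handle the unboundedness of $F$ from \emph{below} there. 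This is done by approximating the density $G=e^{F}$ by smooth cylinder densities not in $L^{2}(\wm)$ but in $L^{2}(\nu_{F})$ with $d\nu_{F}=(1+F_{-})\,d\wm$ (\lref{;ldense}), which makes $\int F_{-}G_{n}\,d\wm\to\int F_{-}G\,d\wm$ automatic by Cauchy--Schwarz against the weight, and by controlling $\int G_{n}\log G_{n}\,d\wm$ via uniform integrability and Vitali's theorem; each approximating measure lies in $\cS$, where \lref{;lleh2} supplies a drift achieving equality in the entropy identity. That choice of reference measure and the Vitali step are the actual content of the theorem beyond Lehec's bounded case, and they are absent from your proposal.
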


We may replace the supremum over $\A $ by that over a 
class of bounded drifts; see \cref{;ctmain1}.

We give a remark on \tref{;tmain1}. 

\begin{rem}\label{;rtmain1}
\thetag{1}~Under assumption 
\thetag{A1},  the right-hand side of \eqref{;vr1} is 
well-defined in the sense that, for any $\v \in \A $, 
\begin{align*}
 \ex \!\left[ 
 F_{+}\left( \br +\int _{0}^{\cdot }\v _{t}\,dt\right) 
 \right] <\infty  \quad \text{with} \quad 
 F_{+}:=\max \left\{ F,0\right\} , 
\end{align*}
while 
$
 \ex \!\left[ 
 F_{-}\left( \br +\int _{0}^{\cdot }\v _{t}\,dt\right) 
 \right] 
$ 
may take value $\infty $ for some $\v \in \A $; 
see the beginning of the proof of \pref{;plb}. 

\noindent 
\thetag{2}~Although a little involved argument is used in 
\cite[Section~5]{bd}, the extension of formula~\eqref{;vr0} 
to the case that $F$ is only assumed to be bounded from 
below is immediate from the monotone convergence theorem. 
Indeed, for each positive real $M$, truncating $F$ from above by 
$M$, we have from \eqref{;vr0}, 
\begin{align*}
 \log \ex \!\left[ 
 e^{F_{M}(\br )}
 \right] 
 =\sup _{\v \in \A }\ex \!\left[ 
 F_{M}\left( \br +\int _{0}^{\cdot }\v _{t}\,dt\right) 
 -\frac{1}{2}\int _{0}^{1}|\v _{t}|^{2}\,dt
 \right] , 
\end{align*}
where  
$
F_{M}:=
\min \left\{ F,M\right\} 
$; then, by the monotone convergence theorem, 
the left-hand side converges as $M\to \infty $ to 
the expression with $F_{M}$ replaced by $F$, and so does 
the right-hand side since 
\begin{equation*}
 \begin{split}
 &\sup _{M>0}\sup _{\v \in \A }
 \ex \!\left[ 
 F_{M}\left( \br +\int _{0}^{\cdot }\v _{t}\,dt\right) 
 -\frac{1}{2}\int _{0}^{1}|\v _{t}|^{2}\,dt
 \right] 
 \\
 =&\sup _{\v \in \A }\sup _{M>0}
 \ex \!\left[ 
 F_{M}\left( \br +\int _{0}^{\cdot }\v _{t}\,dt\right) 
 -\frac{1}{2}\int _{0}^{1}|\v _{t}|^{2}\,dt
 \right] \\
 =&\sup _{\v \in \A }\ex \!\left[ 
 F\left( \br +\int _{0}^{\cdot }\v _{t}\,dt\right) 
 -\frac{1}{2}\int _{0}^{1}|\v _{t}|^{2}\,dt
 \right] . 
 \end{split}
\end{equation*}
In this respect, what is essential in \tref{;tmain1} is the 
removal of the boundedness of $F$ from below. 

\noindent 
\thetag{3}~In \cite[Theorem~7]{ust}, formula~\eqref{;vr0} 
is proven under the condition that, for some 
$p,q>1$ with $p^{-1}+q^{-1}=1$, 
\begin{align*}
 \ex \!\left[ 
 \left|F(\br )\right| ^{p}\right] <\infty \quad \text{and} \quad 
 \ex \!\left[ e^{qF(\br )}\right] <\infty ,
\end{align*}
while, in \cite[Theorem~1.1]{har}, the condition that 
\begin{align*}
 \ex \!\left[ 
 \left|F(\br )\right| ^{p}\right] <\infty \ \text{for some }p>1 
 \quad \text{and} \quad 
 \ex \!\left[ e^{F(\br )}\right] <\infty 
\end{align*}
is imposed. Due to their methods, the restriction $p>1$ seems 
inevitable: \cite{ust} uses its Lemma~1 and \cite{har} its Lemma~2.10.
Note that our assumption of \tref{;tmain1} is 
equivalently rephrased as 
\begin{align*}
 \ex \!\left[ 
 \left|F(\br )\right| \right] <\infty \quad \text{and} \quad 
 \ex \!\left[ e^{F(\br )}\right] <\infty .
\end{align*}
\end{rem}

Independently of the work \cite{bd} by Bou\'e--Dupuis, 
Borell \cite{bor} proved formula~\eqref{;vr0} when $F(\br )$ 
is of the form $f(\br _{1})$ with 
$f:\R ^{d}\to \R $ a bounded measurable function, and, among 
other applications, applied it to a simple derivation of 
the Pr\'ekopa--Leindler inequality. 
In the last part of the paper, we will show that it also 
yields readily the exponential version of the hypercontractivity of 
the Ornstein--Uhlenbeck semigroup in $\R ^{d}$; 
the equivalence between the exponential 
hypercontractivity and the logarithmic Sobolev inequality then entails 
that the formula implies the Gaussian logarithmic Sobolev inequality.

\medskip 
We give an outline of the paper. In \sref{;prfvr}, we prove 
\tref{;tmain1}; the lower bound in formula~\eqref{;vr1} is 
proven in \ssref{;ssplb} while the upper bound is 
proven in \ssref{;sspub}, where 
the case of bounded drifts is also stated in \cref{;ctmain1}. 
The paper is concluded with \sref{;sappl} that explores 
the above-mentioned connection between 
the formula and the exponential hypercontractivity of the 
Ornstein--Uhlenbeck semigroup.

\medskip 
For every $a,b\in \R $, we write $a\vee b=\max\{ a,b\} $ and 
$a\wedge b=\min \{ a,b\} $. 
For a positive integer $n$, we denote by 
$C_{b}^{\infty }(\R ^{n})$ the set of real-valued bounded 
$C^{\infty }$-functions on $\R ^{n}$ whose partial 
derivatives are all bounded. 
Given a measured space $(\sX ,\sB ,m)$, for every 
$1\le p\le \infty $, 
we denote by $L^{p}(m)$ the set of real-valued 
measurable functions $f$ on the measurable space $(\sX ,\sB )$ 
such that 
\begin{align*}
 \bigl\{ \nt{f}{L^{p}(m)}\bigr\} ^{p}:=\int _{\sX }|f(x)|^{p}\,m(dx)<\infty && 
 \text{for $p<\infty $,}
\intertext{and that, with $\esssup $ denoting the essential supremum 
with respect to $m$,}
 \nt{f}{L^{\infty }(m)}:=\esssup _{x\in \sX }|f(x)| <\infty && 
 \text{for $p=\infty $.}
\end{align*}
Other notation will be introduced as needed. 

\section{Proof of \tref{;tmain1}}\label{;prfvr} 

This section is devoted to the proof of \tref{;tmain1}. 

Let $(\sX ,\sB )$ be a measurable space and 
$\cP (\sX )$ the set of probability measures on it. 
For $\mu ,\nu \in \cP (\sX )$, recall that the relative 
entropy $H(\nu \mid \mu )$ of $\nu $ with respect to 
$\mu $ is defined by 
\begin{align*}
 H(\nu \mid \mu ):=
 \begin{cases}
 \displaystyle 
 \int _{\sX }\frac{d\nu }{d\mu }\log \frac{d\nu }{d\mu }\,d\mu & 
 \text{if $\nu \ll \mu $},\\
 +\infty & \text{otherwise}
 \end{cases}
\end{align*}
(see, e.g., \cite[Section~1.4]{de}).

In the sequel, for every drift $\v $, we denote 
\begin{align*} 
 \br ^{\v }_{t}=\br _{t}+\int _{0}^{t}\v _{s}\,ds, 
 \quad t\ge 0, 
\end{align*}
the Brownian motion drifted by $\v$ and, whenever $\v \in \A $, 
\begin{align*}
 \nt{\v }{\A }=
 \left\{ \ex \!\left[ 
 \int _{0}^{\infty }|\v _{t}|^{2}\,dt
 \right] \right\} ^{1/2}. 
\end{align*}
A drift $\v $ is said to be {\it bounded} if it satisfies 
\begin{align*}
 \sup _{t\ge 0}\nt{|\v _{t}|}{L^{\infty }(\pr )}<\infty .
\end{align*}
For later use in \ssref{;sspub}, we set
\begin{align*}
 \A _{b}&:=\left\{ 
 \v ;\,\text{$\v $ is a bounded drift satisfying \eqref{;intv}}
 \right\} ,\\
 \A _{b,0}&:=\left\{ 
 \v ;\begin{array}{l}
 \text{$\v $ is a bounded drift satisfying that there exists}\\
 \text{$K>0$ such that $\v _{t}=0$ a.s.\ for all $t\ge K$}
 \end{array}
 \right\} .
\end{align*}
Clearly, we have $\A _{b,0}\subset \A _{b}\subset \A $.

\subsection{Lower bound}\label{;ssplb}
In this subsection, we give a proof of the lower bound in 
\eqref{;vr1}, that is, we prove 

\begin{prop}\label{;plb}
Assume that a measurable function $F:\W \to \R $ satisfies 
\thetag{A1}. Then we have
\begin{align}\label{;vr1l}
\log \ex \!\left[ 
 e^{F(\br )}
 \right] 
 \ge \sup _{\v \in \A }\left\{ 
 \ex \!\left[ F\!\left( 
 \br ^{\v }
 \right) 
 \right] -\frac{1}{2}\nt{\v }{\A }^{2}
 \right\} .
\end{align}
\end{prop}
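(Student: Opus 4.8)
The plan is to read the energy penalty $\frac{1}{2}\nt{\v }{\A }^{2}$ as a bound on a relative entropy and then invoke the elementary half of the Gibbs variational principle. Fix $\v \in \A $ and write $\mu _{\v }$ for the law on $(\W ,\cB (\W ))$ of the drifted motion $\br ^{\v }$ under $\pr $, so that $\ex [F(\br ^{\v })]=\int _{\W }F\,d\mu _{\v }$ by change of variables. I would establish two ingredients: (i) the entropy inequality $\int _{\W }F\,d\mu _{\v }\le H(\mu _{\v }\mid \wm )+\log \int _{\W }e^{F}\,d\wm $, valid whenever $\int _{\W }e^{F}\,d\wm <\infty $; and (ii) the energy bound $H(\mu _{\v }\mid \wm )\le \frac{1}{2}\nt{\v }{\A }^{2}$. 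Granting both and rearranging gives $\ex [F(\br ^{\v })]-\frac{1}{2}\nt{\v }{\A }^{2}\le \log \ex [e^{F(\br )}]$ for each $\v \in \A $, and taking the supremum over $\v \in \A $ yields \eqref{;vr1l}.

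For (ii) I would first treat $\v \in \A _{b,0}$. Since such a drift is bounded and vanishes after a deterministic time $K$, the quantity $\int _{0}^{\infty }|\v _{t}|^{2}\,dt$ is bounded, Novikov's condition holds, and the exponential martingale $Z_{t}=\exp (-\int _{0}^{t}\v _{s}\cdot d\br _{s}-\frac{1}{2}\int _{0}^{t}|\v _{s}|^{2}\,ds)$ is uniformly integrable. Girsanov's theorem then makes $\Q :=Z_{\infty }\,\pr $ a probability measure under which $\br ^{\v }$ is a Brownian motion, so that $\mu _{\v }=(\br ^{\v })_{*}\pr $ while $\wm =(\br ^{\v })_{*}\Q $. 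By the contraction of relative entropy under the measurable map $\br ^{\v }$ (data processing), $H(\mu _{\v }\mid \wm )\le H(\pr \mid \Q )=\ex [-\log Z_{\infty }]=\frac{1}{2}\ex [\int _{0}^{\infty }|\v _{t}|^{2}\,dt]$, where the stochastic-integral term vanishes because $\v \in \A $ makes $\int _{0}^{\cdot }\v _{s}\cdot d\br _{s}$ a mean-zero martingale; this is exactly $\frac{1}{2}\nt{\v }{\A }^{2}$. For general $\v \in \A $ I would truncate, setting $\v ^{(n)}_{t}=\v _{t}\ind _{\{ |\v _{t}|\le n\} }\ind _{[0,n]}(t)\in \A _{b,0}$; then $\v ^{(n)}\to \v $ in $L^{2}(\pr \otimes dt)$, whence $\br ^{\v ^{(n)}}\to \br ^{\v }$ uniformly on compacts in probability and $\mu _{\v ^{(n)}}\to \mu _{\v }$ weakly. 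Lower semicontinuity of $\nu \mapsto H(\nu \mid \wm )$ then gives $H(\mu _{\v }\mid \wm )\le \liminf _{n}H(\mu _{\v ^{(n)}}\mid \wm )\le \frac{1}{2}\nt{\v }{\A }^{2}$; in particular $\mu _{\v }\ll \wm $ with finite entropy.

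Ingredient (i) is where assumption \thetag{A1} enters. Once (ii) provides $\mu _{\v }\ll \wm $ with $H(\mu _{\v }\mid \wm )<\infty $, I would first dispose of well-definedness as announced in \rref{;rtmain1}\thetag{1}: applying the entropy inequality to $F_{+}$ together with $\int _{\W }e^{F_{+}}\,d\wm \le \int _{\W }e^{F}\,d\wm +1<\infty $ gives $\ex [F_{+}(\br ^{\v })]=\int _{\W }F_{+}\,d\mu _{\v }<\infty $, so that $\ex [F(\br ^{\v })]$ is well-defined in $[-\infty ,\infty )$. The inequality for $F$ itself then follows from Young's inequality $xy\le x\log x-x+e^{y}$ applied pointwise with $x=d\mu _{\v }/d\wm $ and $y=F$, and integration against $\wm $.

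I expect the main obstacle to be ingredient (ii) for unbounded drifts: Novikov's condition can genuinely fail on $\A $, so Girsanov is not available directly and one must route through the compactly supported bounded case $\A _{b,0}$ and the lower semicontinuity of relative entropy. Verifying the weak convergence $\mu _{\v ^{(n)}}\to \mu _{\v }$ and invoking lower semicontinuity—rather than attempting to compute $H(\mu _{\v }\mid \wm )$ as the energy of the optional projection of $\v $ onto the filtration of $\br ^{\v }$—is the delicate point, but it is precisely here that an inequality, as opposed to an exact identity, is all the lower bound demands.
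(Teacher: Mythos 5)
Your proposal is correct, and its skeleton --- bound $\ex\!\left[F(\br^{\v})\right]$ by $H(\mu_{\v}\mid\wm)+\log\ex\!\left[e^{F(\br)}\right]$ via the entropy (Gibbs) inequality, then bound $H(\mu_{\v}\mid\wm)$ by $\tfrac12\nt{\v}{\A}^{2}$ --- is exactly the paper's decomposition into \lref{;lde} and \lref{;lleh1}. The differences are in execution. The paper invokes the Dupuis--Ellis variational formula only for $F$ bounded from below, and therefore needs a second layer of approximation on $F$ itself: it first establishes \eqref{;fp} by truncating $F_{+}$ from above, then applies the bounded-below case to $F\vee(-N)$ and passes to the limit with monotone convergence and an $\inf\sup\ge\sup\inf$ interchange. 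You bypass that layer by proving the one-sided entropy inequality directly for any $F$ with $e^{F}\in L^{1}(\wm)$ from the pointwise bound $xy\le x\log x-x+e^{y}$; this is cleaner, with one small repair needed: applied verbatim it gives $\int F\,d\mu_{\v}\le H(\mu_{\v}\mid\wm)-1+\int e^{F}\,d\wm$, so you must first replace $F$ by $F-\log\int e^{F}\,d\wm$ to obtain the stated form with the logarithm (and read the inequality as vacuous when $\int F\,d\mu_{\v}=-\infty$, which your preliminary application to $F_{+}$ shows is the only degenerate case). Your ingredient (ii) is precisely Lehec's Proposition~1, which the paper simply quotes as \lref{;lleh1}; your route through Girsanov and data processing on $\A_{b,0}$, followed by truncation of the drift and lower semicontinuity of $H(\cdot\mid\wm)$ under weak convergence, is a correct self-contained proof of that cited lemma, at the price of more machinery than the paper itself deploys for this proposition.
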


\pref{;plb} is immediate once the following 
lemma is at our disposal.

\begin{lem}\label{;llb}
Under \thetag{A1}, the lower bound \eqref{;vr1l} holds 
when $F$ is bounded from below.
\end{lem}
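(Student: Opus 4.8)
The plan is to deduce the lower bound from two ingredients: the elementary ``dual'' form of the Gibbs variational principle, and the fact that the law of a drifted Brownian motion has relative entropy with respect to $\wm $ controlled by the energy of its drift. Concretely, fix $\v \in \A $ and let $\nu \in \cP (\W )$ denote the law of $\br ^{\v }$ on $\W $. I would establish
\begin{align*}
 \ex \!\left[ F\!\left( \br ^{\v }\right) \right]
 &\le H(\nu \mid \wm )+\log \ex \!\left[ e^{F(\br )}\right] , \\
 H(\nu \mid \wm )&\le \tfrac{1}{2}\nt{\v }{\A }^{2},
\end{align*}
and then combine them and take the supremum over $\v \in \A $.

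The first inequality is the entropy (Donsker--Varadhan) inequality: for any $\nu \ll \wm $ and any measurable $G$ with $e^{G}\in L^{1}(\wm )$ one has $\int _{\W }G\,d\nu \le H(\nu \mid \wm )+\log \int _{\W }e^{G}\,d\wm $. Writing $g=d\nu /d\wm $ and using $\int _{\W }\log g\,d\nu =H(\nu \mid \wm )$, this follows at once from Jensen's inequality applied to the probability measure $\nu $, since $\int _{\W }(G-\log g)\,d\nu =\int _{\{ g>0\} }\log (e^{G}/g)\,d\nu \le \log \int _{\{ g>0\} }(e^{G}/g)\,d\nu =\log \int _{\{ g>0\} }e^{G}\,d\wm \le \log \int _{\W }e^{G}\,d\wm $. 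Here the hypothesis that $F$ is bounded from below is what makes the left-hand side $\int _{\W }F\,d\nu =\ex [F(\br ^{\v })]$ well defined in $(-\infty ,+\infty ]$ (as $\ex [F_{-}(\br ^{\v })]<\infty $); and, the right-hand side being finite under \thetag{A1} once the entropy bound is in hand, the inequality also shows a posteriori that $\ex [F(\br ^{\v })]<\infty $.

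The second inequality is the Girsanov/Lehec ingredient and is the crux of the argument. For a bounded drift vanishing after a finite time, $\v \in \A _{b,0}$, Novikov's condition holds, so the relevant exponential local martingale is a true martingale; Girsanov's theorem then identifies $d\nu /d\wm $ explicitly, and a direct computation, together with $\ex [\int _{0}^{\infty }\v _{t}\cdot d\br _{t}]=0$, yields the exact identity $H(\nu \mid \wm )=\frac{1}{2}\ex [\int _{0}^{\infty }|\v _{t}|^{2}\,dt]=\frac{1}{2}\nt{\v }{\A }^{2}$. The difficulty is to pass from such drifts to a general $\v \in \A $, for which Novikov's condition may fail; here I would follow Lehec by approximating $\v $ in $\A $ by the truncated, time-restricted drifts $\v ^{(n)}_{t}=\v _{t}\ind _{\{ |\v _{t}|\le n,\,t\le n\} }\in \A _{b,0}$, which satisfy $\nt{\v ^{(n)}}{\A }\le \nt{\v }{\A }$ and $\v ^{(n)}\to \v $ in $\A $. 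Then $\br ^{\v ^{(n)}}\to \br ^{\v }$ in $\W $ in probability, so the laws $\nu ^{(n)}$ converge weakly to $\nu $, and the weak lower semicontinuity of relative entropy gives $H(\nu \mid \wm )\le \liminf _{n}H(\nu ^{(n)}\mid \wm )\le \frac{1}{2}\nt{\v }{\A }^{2}$. Making this approximation and the attendant absolute continuity rigorous for every $\v \in \A $ is the main obstacle.

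Granting both inequalities, for each $\v \in \A $ we obtain $\ex [F(\br ^{\v })]-\frac{1}{2}\nt{\v }{\A }^{2}\le \ex [F(\br ^{\v })]-H(\nu \mid \wm )\le \log \ex [e^{F(\br )}]$, and taking the supremum over $\v \in \A $ yields \eqref{;vr1l}.
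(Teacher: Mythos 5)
Your argument is correct and is essentially the paper's own proof: it combines the Donsker--Varadhan entropy inequality (the easy direction of the Gibbs variational principle, cited in the paper as \lref{;lde} from \cite{de}) with the entropy bound $H(\mu \mid \wm )\le \frac{1}{2}\nt{\v }{\A }^{2}$ for the law of $\br ^{\v }$ (cited as \lref{;lleh1}, Lehec's Proposition~1). The only difference is that you prove the first ingredient via Jensen and sketch a Girsanov-plus-approximation proof of the second, whereas the paper simply invokes both as known lemmas.
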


By using this lemma, \pref{;plb} is proven as follows: 
\begin{proof}[Proof of \pref{;plb}]
 First we verify that under assumption \thetag{A1}, 
 \begin{align}\label{;fp}
  \ex \!\left[ 
  F_{+}\!\left( \br ^{\v }\right) 
  \right] <\infty \quad 
  \text{for any }\v \in \A , 
 \end{align}
 where $F_{+}(w):=F(w)\vee 0,\,w\in \W $. 
 Fix $\v \in \A $ arbitrarily and set  
 $F_{+,M}=F_{+}\wedge M$ for each $M>0$. 
 Then, by \lref{;llb}, we have in particular 
 \begin{align*}
  \ex \!\left[ 
  F_{+,M}\!\left( \br ^{\v }\right) 
  \right] \le \log \ex \!\left[ 
  e^{F_{+,M}(\br )}\right] +\frac{1}{2}\nt{\v }{\A }^{2}. 
 \end{align*}
 By letting $M\to \infty $, the monotone convergence theorem 
 entails that 
 \begin{align*}
  \ex \!\left[ 
  F_{+}\!\left( \br ^{\v }\right) 
  \right] &\le \log \ex \!\left[ 
  e^{F_{+}(\br )}\right] +\frac{1}{2}\nt{\v }{\A }^{2}\\
  &\le \log \ex \!\left[ 
  1+e^{F_{}(\br )}\right] +\frac{1}{2}\nt{\v }{\A }^{2}, 
 \end{align*}
 which is finite by \thetag{A1}. 
 
 For every $N>0$, we now define 
 \begin{align*}
  F_{N}(w):=F(w)\vee (-N),\quad w\in \W . 
 \end{align*}
 Then, by \lref{;llb}, the lower bound \eqref{;vr1l} 
 holds for $F_{N}$:  
 \begin{align}\label{;vr1M}
  \lhs{F_{N}}\ge 
  \sup _{\v \in \A }
  \left\{ 
  \ex \!\left[ 
  F_{N}\!\left( 
  \br ^{\v }
  \right) 
  \right] 
  -\frac{1}{2}\nt{\v }{\A }^{2}
  \right\} .
 \end{align}
 By assumption \thetag{A1}, 
 the random variable $\sup \limits_{N>0}e^{F_{N}(\br )}$ is integrable 
 and so is $\sup \limits_{N>0}F_{N}\!\left( \br ^{\v }\right) $ 
 for any $\v \in \A $ thanks to \eqref{;fp}. 
 Therefore, as $N\to \infty $, we may use the monotone 
 convergence theorem on both sides 
 of \eqref{;vr1M} to get 
 \begin{align*}
  \lhs{F}&\ge \inf _{N>0}\sup _{\v \in \A }
  \left\{ 
  \ex \!\left[ 
  F_{N}\!\left( 
  \br ^{\v }
  \right) 
  \right] 
  -\frac{1}{2}\nt{\v }{\A }^{2}
  \right\} \\
  &\ge \sup _{\v \in \A }\inf _{N>0}
  \left\{ 
  \ex \!\left[ 
  F_{N}\!\left( 
  \br ^{\v }
  \right) 
  \right] 
  -\frac{1}{2}\nt{\v }{\A }^{2}
  \right\} \\
  &=\sup _{\v \in \A }
  \left\{ 
  \ex \!\left[ 
  F_{}\!\left( 
  \br ^{\v }
  \right) 
  \right] 
  -\frac{1}{2}\nt{\v }{\A }^{2}
  \right\} , 
 \end{align*}
 which is the assertion. 
\end{proof}

We proceed to the proof of \lref{;llb}. We prepare 
two lemmas, the first one of which is adapted from 
\cite[Proposition~4.5.1]{de}.
\begin{lem}\label{;lde}
 Let $F:\W \to \R $ be a measurable function bounded from 
 below. Then it holds that 
 \begin{align*}
  \log \int _{\W }e^{F}\,d\wm 
  =\sup _{\mu \in \Delta (\W )}\left\{ 
  \int _{\W }F\,d\mu -H(\mu \mid \wm )
  \right\} ,
 \end{align*}
 where 
 $\Delta (\W ):=\left\{ 
  \mu \in \cP (\W );\,H(\mu \mid \wm )<\infty 
  \right\} 
 $.
\end{lem}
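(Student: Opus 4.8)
The plan is to prove the two inequalities separately. The inequality
\[
\sup_{\mu\in\Delta(\W)}\Bigl\{\int_\W F\,d\mu-H(\mu\mid\wm)\Bigr\}\le\log\int_\W e^F\,d\wm
\]
is the Gibbs (entropy) inequality and holds for \emph{every} measurable $F$ bounded from below, whereas the reverse inequality is where the genuine work lies. For the displayed inequality I would fix $\mu\in\Delta(\W)$, set $f=d\mu/d\wm$, and rewrite the integrand on the left as
\[
\int_\W F\,d\mu-H(\mu\mid\wm)=\int_\W\log\frac{e^F}{f}\,d\mu .
\]
This manipulation is legitimate because $F$ is bounded below and $H(\mu\mid\wm)<\infty$, so that $\int_\W F\,d\mu\in(-\infty,+\infty]$ and $\int_\W\log f\,d\mu$ is a finite real. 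Since $\mu$ is a probability measure and $\log$ is concave, Jensen's inequality gives $\int_\W\log(e^F/f)\,d\mu\le\log\int_\W(e^F/f)\,d\mu=\log\int_{\{f>0\}}e^F\,d\wm\le\log\int_\W e^F\,d\wm$, which is the claim (and is vacuous when $\int_\W e^F\,d\wm=\infty$).

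For the reverse inequality, the naive candidate maximizer is the tilted measure $\nu=e^{F-c}\wm$ with $c=\log\int_\W e^F\,d\wm$, for which one formally computes $\int_\W F\,d\nu-H(\nu\mid\wm)=c$. The difficulty, and the main obstacle, is that $\nu$ need not belong to $\Delta(\W)$: one has $H(\nu\mid\wm)=\int_\W(F-c)e^{F-c}\,d\wm$, and $F_+e^{F}$ may fail to be $\wm$-integrable even though $e^F$ is (the law of $e^F$ can have a tail making $\int_\W e^F\,d\wm$ converge while $\int_\W Fe^F\,d\wm$ diverges). Hence $\nu$ is in general not an admissible competitor and cannot simply be plugged in.

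To circumvent this I would truncate from above: set $F_M:=F\wedge M$ for $M>0$, so that each $F_M$ is bounded and the variational formula for bounded functions (\cite[Proposition~4.5.1]{de}, i.e.\ the classical Donsker--Varadhan representation) applies and yields
\[
\log\int_\W e^{F_M}\,d\wm=\sup_{\mu\in\Delta(\W)}\Bigl\{\int_\W F_M\,d\mu-H(\mu\mid\wm)\Bigr\}\le\sup_{\mu\in\Delta(\W)}\Bigl\{\int_\W F\,d\mu-H(\mu\mid\wm)\Bigr\},
\]
the last step using $F_M\le F$. Since $e^{F_M}\uparrow e^F$ as $M\to\infty$, the monotone convergence theorem shows that the left-hand side increases to $\log\int_\W e^F\,d\wm$ (whether this is finite or $+\infty$), and letting $M\to\infty$ gives $\log\int_\W e^F\,d\wm\le\sup_{\mu\in\Delta(\W)}\{\int_\W F\,d\mu-H(\mu\mid\wm)\}$. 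Combining the two inequalities yields the asserted identity. The only point requiring real care is thus the reduction to bounded $F_M$; once that is in place, the argument is just monotone convergence together with Jensen's inequality.
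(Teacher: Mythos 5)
Your proof is correct. Note that the paper does not actually prove this lemma: it is stated as ``adapted from'' Dupuis--Ellis \cite[Proposition~4.5.1]{de} and used as a black box, so there is no in-paper argument to compare against. Your two-step derivation is exactly the standard way to carry out that adaptation: the inequality $\sup_{\mu}\le\log\int_{\W}e^{F}\,d\wm$ via the Gibbs/Jensen argument (where your care about $\int_{\W}\log f\,d\mu$ being a finite real because $x(\log x)_{-}$ is bounded is the right point to check), and the reverse inequality by truncating from above, invoking the Donsker--Varadhan formula for bounded measurable functions, using $F_{M}\le F$, and passing to the limit by monotone convergence. This truncation-from-above-plus-monotone-convergence device is the same one the authors themselves use elsewhere (see Remark~1.1\thetag{2} and the proof of Proposition~2.2), so your argument is entirely in the spirit of the paper and legitimately fills in the omitted proof. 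One cosmetic remark: the bounded-function representation is usually attributed to \cite[Proposition~1.4.2]{de} with the supremum taken over all of $\cP(\W)$; restricting to $\Delta(\W)$ costs nothing since measures of infinite relative entropy contribute $-\infty$, a point worth one line in a polished write-up.
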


The second one is taken from \cite{leh}.
\begin{lem}[\cite{leh}, Proposition~1]\label{;lleh1}
 Let $\v $ be a drift and $\mu $ the law of 
 $\br ^{\v }$. 
 Then it holds that 
 \begin{align*}
  H(\mu \mid \wm )
  \le \frac{1}{2}\ex \!\left[ \int _{0}^{\infty }|\v _{t}|^{2}\,dt \right] .
 \end{align*}
\end{lem}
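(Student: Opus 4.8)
The plan is to establish the bound first for the well-behaved drifts in $\A _{b,0}$ by an explicit Girsanov change of measure, and then to pass to a general $\v \in \A $ by approximation together with the lower semicontinuity of the relative entropy. For the reduction, given $\v \in \A $ I would set $\v ^{(n)}_{t}:=\v _{t}\ind _{\{ t\le n\} }\ind _{\{ |\v _{t}|\le n\} }$, which lie in $\A _{b,0}$ and satisfy $\ex [\int _{0}^{\infty }|\v ^{(n)}_{t}-\v _{t}|^{2}\,dt]\to 0$ by dominated convergence; hence $\nt{\v ^{(n)}}{\A }\le \nt{\v }{\A }$ and, by Cauchy--Schwarz on each interval $[0,T]$, $\sup _{t\le T}|\br ^{\v ^{(n)}}_{t}-\br ^{\v }_{t}|\to 0$ in probability, so the laws $\mu ^{(n)}$ of $\br ^{\v ^{(n)}}$ converge weakly on $\W $ to $\mu $. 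Since $\mu \mapsto H(\mu \mid \wm )$ is lower semicontinuous for the weak topology (see \cite{de}), once the bound is known on $\A _{b,0}$ it follows that $H(\mu \mid \wm )\le \liminf _{n}H(\mu ^{(n)}\mid \wm )\le \liminf _{n}\tfrac{1}{2}\nt{\v ^{(n)}}{\A }^{2}\le \tfrac{1}{2}\nt{\v }{\A }^{2}$.

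The heart of the matter is thus $\v \in \A _{b,0}$, say $\v _{t}=0$ for $t\ge K$. Then $\int _{0}^{\infty }|\v _{t}|^{2}\,dt$ is bounded, Novikov's condition holds, and $\mathcal{E}_{t}:=\exp (-\int _{0}^{t}\v _{s}\,d\br _{s}-\tfrac{1}{2}\int _{0}^{t}|\v _{s}|^{2}\,ds)$ is a genuine $\{ \calF ^{\br }_{t}\} $-martingale with $\ex [\mathcal{E}_{\infty }]=1$. Defining $\Q $ by $d\Q /d\pr =\mathcal{E}_{\infty }$, Girsanov's theorem makes $\br ^{\v }$ a Brownian motion under $\Q $, so the law of $\br ^{\v }$ under $\Q $ is $\wm $. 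Substituting $d\br _{s}=d\br ^{\v }_{s}-\v _{s}\,ds$ gives $\mathcal{E}_{\infty }^{-1}=\exp (\int _{0}^{\infty }\v _{s}\,d\br ^{\v }_{s}-\tfrac{1}{2}\int _{0}^{\infty }|\v _{s}|^{2}\,ds)$. Using $d\pr =\mathcal{E}_{\infty }^{-1}\,d\Q $, for bounded measurable $\phi $ on $\W $ one has $\int _{\W }\phi \,d\mu =\ex [\phi (\br ^{\v })]=\ex _{\Q }[\phi (\br ^{\v })\,G]$ with $G:=\ex _{\Q }[\mathcal{E}_{\infty }^{-1}\mid \sigma (\br ^{\v })]$; hence, as $\br ^{\v }\sim \wm $ under $\Q $, the density $d\mu /d\wm $ is represented by $G$ and $H(\mu \mid \wm )=\ex _{\Q }[G\log G]$.

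Crucially, $\mathcal{E}_{\infty }^{-1}$ is $\sigma (\br )$-measurable but generally not $\sigma (\br ^{\v })$-measurable, and this is exactly what converts the ensuing equality into an inequality. Applying the conditional Jensen inequality to the convex function $x\mapsto x\log x$ (which is bounded below, so all conditional expectations are well defined), I obtain
\[
 H(\mu \mid \wm )=\ex _{\Q }[G\log G]
 \le \ex _{\Q }[\mathcal{E}_{\infty }^{-1}\log \mathcal{E}_{\infty }^{-1}]
 =\ex [\log \mathcal{E}_{\infty }^{-1}]
 =\ex \!\left[ \int _{0}^{\infty }\v _{s}\,d\br _{s}+\tfrac{1}{2}\int _{0}^{\infty }|\v _{s}|^{2}\,ds\right] .
\]
For $\v \in \A _{b,0}$ the stochastic integral is a mean-zero $L^{2}$ martingale increment, so the last expression equals $\tfrac{1}{2}\nt{\v }{\A }^{2}$, finishing this case.

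The step I expect to be the main obstacle is the measurability/conditioning point: justifying that the density of $\mu $ against $\wm $ is the $\sigma (\br ^{\v })$-conditional expectation of $\mathcal{E}_{\infty }^{-1}$ and that conditional Jensen applies there. This is the mechanism that produces the inequality---rather than an equality---and that accommodates general, non-adapted drifts; the approximation and lower-semicontinuity step is then routine once the weak convergence of the $\mu ^{(n)}$ has been checked.
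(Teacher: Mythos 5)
Your proof is correct. The paper itself does not prove this lemma---it simply quotes it as Proposition~1 of Lehec's paper---and your argument is essentially a faithful reconstruction of Lehec's: reduce to bounded, compactly supported drifts by truncation and lower semicontinuity of $H(\cdot \mid \wm )$ under weak convergence, then apply Girsanov and the conditional Jensen inequality, the latter being exactly the data-processing inequality $H(\mu \mid \wm )=H\bigl( \pr \circ (\br ^{\v })^{-1}\mid \mathbb{Q}\circ (\br ^{\v })^{-1}\bigr) \le H(\pr \mid \mathbb{Q})=\tfrac{1}{2}\nt{\v }{\A }^{2}$ for the Girsanov measure $\mathbb{Q}$. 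All the steps you flag as delicate (the identification of $d\mu /d\wm $ with the $\sigma (\br ^{\v })$-conditional expectation of $\mathcal{E}_{\infty }^{-1}$, and the integrability needed for Jensen) are handled correctly.
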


Combining these lemmas yields \lref{;llb} readily. 
\begin{proof}[Proof of \lref{;llb}]
For an arbitrary $\v \in \A $, let $\mu $ be the law of $\br ^{\v }$. 
Then, since $\mu \in \Delta (\W )$ by the definition of 
$\A $ and \lref{;lleh1}, we have from 
\lref{;lde} that 
\begin{align*}
 \log \int _{\W }e^{F}\,d\wm &\ge 
 \int _{\W }F\,d\mu -H(\mu \mid \wm )\\
 &\ge \int _{\W }F\,d\mu -\frac{1}{2}\nt{\v }{\A }^{2}, 
\end{align*}
where we used \lref{;lleh1} again for the second line. 
The assertion is proven because $\mu $ is the law of 
$\br ^{\v }$ and $\v $ is arbitrary.
\end{proof}

\subsection{Upper bound}\label{;sspub} 
In this subsection, we prove the upper bound in 
\eqref{;vr1}:  

\begin{prop}\label{;pub}
Assume that a measurable function $F:\W \to \R $ satisfies 
\thetag{A1} and \thetag{A2}. Then we have 
\begin{align}\label{;eqpub}
 \log \ex \!\left[ 
 e^{F(\br )}
 \right] 
 \le \sup _{\v \in \A }\left\{ 
 \ex \!\left[ F\!\left( 
 \br ^{\v }
 \right) 
 \right] -\frac{1}{2}\nt{\v }{\A }^{2}
 \right\} .
\end{align}
\end{prop}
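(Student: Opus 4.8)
The plan is to establish the upper bound by passing through the variational characterization of relative entropy in \lref{;lde} and then realizing an (almost) optimal measure as the law of a drifted Brownian motion, thereby converting the entropy term into the $L^2$-cost $\tfrac12\nt{\v}{\A}^2$. Concretely, by \lref{;lde} we have
\begin{align*}
 \log \ex\!\left[e^{F(\br)}\right]
 =\sup_{\mu\in\Delta(\W)}\left\{\int_{\W}F\,d\mu-H(\mu\mid\wm)\right\},
\end{align*}
provided $F$ is bounded below; since $F$ need not be bounded below here, the first task is to reduce to that case. I would truncate from below, setting $F_N:=F\vee(-N)$, apply \lref{;lde} to each $F_N$, and let $N\to\infty$. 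Assumption \thetag{A2} (equivalently $F_-\in L^1(\wm)$) together with \thetag{A1} should guarantee that the left-hand side converges to $\log\ex[e^{F(\br)}]$ and control the integrals $\int_\W F\,d\mu$ uniformly enough to pass to the limit on the right.

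The heart of the argument is to show that for any $\mu\in\Delta(\W)$ there is a drift $\v\in\A$ whose drifted Brownian motion $\br^{\v}$ has law $\mu$ and for which the entropy equals the cost, i.e.\ the inequality in \lref{;lleh1} is saturated:
\begin{align*}
 H(\mu\mid\wm)=\frac{1}{2}\ex\!\left[\int_0^\infty|\v_t|^2\,dt\right].
\end{align*}
This is exactly the converse direction to \lref{;lleh1} and is the content of the sharp form of Lehec's representation: given $\mu\ll\wm$ with finite entropy, the Girsanov/Föllmer drift that transports $\wm$ to $\mu$ is progressively measurable, lies in $\A$, and achieves equality above. Granting this, each $\mu\in\Delta(\W)$ contributes a competitor $\v\in\A$ with
\begin{align*}
 \int_{\W}F\,d\mu-H(\mu\mid\wm)
 =\ex\!\left[F(\br^{\v})\right]-\frac{1}{2}\nt{\v}{\A}^2,
\end{align*}
so the supremum over $\Delta(\W)$ is dominated by the supremum over $\A$, yielding \eqref{;eqpub}.

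The main obstacle is the equality case of \lref{;lleh1}: unlike the lower bound, which only used the inequality in one direction, the upper bound genuinely requires constructing the optimal drift and verifying that it is progressively measurable with finite $\A$-norm. This is where Lehec's disintegration of the entropy along the filtration $\{\calF^{\br}_t\}$ and the identification of the drift as a conditional-expectation (Föllmer) drift enter; care is needed because the time interval is $[0,\infty)$ rather than $[0,1]$, so I would verify that the entropy decomposition and the $L^2$ bound hold up to the terminal time $\infty$. A secondary technical point is the interchange of limits in the truncation step: I expect to justify $\lim_N\int_\W F_N\,d\mu=\int_\W F\,d\mu$ using that $F_-\in L^1(\mu)$, which follows because $\mu\in\Delta(\W)$ and $F_-\in L^1(\wm)$ via the entropy inequality $\int F_-\,d\mu\le H(\mu\mid\wm)+\log\int e^{F_-}\,d\wm$.
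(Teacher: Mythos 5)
Your overall strategy---pass through the entropy variational formula and convert $H(\mu \mid \wm )$ into a drift cost via an optimal (F\"ollmer) drift---is the right circle of ideas, but as written the proposal has two genuine gaps, and the paper's proof is structured precisely to avoid both. The first is that your reduction to the bounded-below case goes in the wrong direction for an upper bound. If you set $F_{N}:=F\vee (-N)$ and obtain the desired inequality for each $F_{N}$, then letting $N\to \infty $ you face an interchange of the form $\inf _{N}\sup (\cdots )\ge \sup \inf _{N}(\cdots )$, which is the unfavorable direction: without uniform control of $\ex [F_{-}\ind _{\{ F<-N\} }\,d\mu ]$ over near-maximizing $\mu $ you only recover the (already known) lower bound. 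Your proposed tool for that control, $\int F_{-}\,d\mu \le H(\mu \mid \wm )+\log \int e^{F_{-}}\,d\wm $, is vacuous here: assumption \thetag{A2} gives only $F_{-}\in L^{1}(\wm )$, and $\int e^{F_{-}}\,d\wm $ may be $+\infty $. This is exactly why the paper truncates from \emph{above}, $F_{M}:=F\wedge M$ (so that $\ex [F_{M}(\br ^{\v })]\le \ex [F(\br ^{\v })]$ and the limit $M\to \infty $ only needs to be taken on the left-hand side), proves the upper bound for bounded-above $F$ directly in \pref{;pubba}, and never uses \lref{;lde} on the upper-bound side at all.

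The second gap is that you invoke the equality case $H(\mu \mid \wm )=\frac{1}{2}\nt{\v }{\A }^{2}$ for \emph{every} $\mu \in \Delta (\W )$. The lemma actually available (\lref{;lleh2}, Lehec's Theorem~7) gives this only for $\mu \in \cS $, i.e.\ for measures with smooth cylindrical densities bounded away from zero; on the infinite time horizon the general finite-entropy statement is not something you can simply cite, and you correctly identify it as the main obstacle without resolving it. Bridging exactly this gap is the content of the paper's \lref{;ldense} and of the proof of \pref{;pubba}: one approximates the normalized density $G=e^{F}$ by elements of $\calF C_{b}^{\infty }$ in $L^{2}(\nu _{F})$ with $d\nu _{F}=(1+F_{-})\,d\wm $ (the weight is what makes $\int F_{-}G_{n}\,d\wm \to \int F_{-}G\,d\wm $ work under \thetag{A2} alone) and controls the entropy terms via Vitali's convergence theorem, so that only the equality case for $\mu \in \cS $ is ever needed. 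To keep your route you would have to prove the infinite-horizon F\"ollmer representation for arbitrary finite-entropy measures and still repair the truncation step; the paper's detour through $\cS $ and $\nu _{F}$ is not cosmetic.
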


We denote by $\calF C_{b}^{\infty }$ the set of functions $\Phi $ 
on $\W $ of the form 
\begin{align}\label{;fcbi}
 \Phi (w)=\phi \left( w(t_{1}),\ldots ,w(t_{m})\right) ,\quad w\in \W ,  
\end{align}
for some $m\in \N $, $0\le t_{1}<\cdots <t_{m}$ and 
$\phi \in C_{b}^{\infty }(\R ^{d\times m})$. 
We also denote by $\mathcal{C}$ the set of cylinder subsets $C$ of 
$\W $, namely, each $C$ is of the form 
\begin{align}\label{;cylinder}
 C=\left\{ 
 w\in \W ;\,\left( w(t_{1}),\ldots ,w(t_{m})\right) \in \Gamma 
 \right\} 
\end{align}
for some $m\in \N $ and $0\le t_{1}<\cdots <t_{m}$, and for some 
Borel subset $\Gamma $ of $\R ^{d\times m}$. It is well known that 
\begin{align}\label{;sigma}
 \sigma (\mathcal{C})=\mathcal{B}(\W )
\end{align} 
(see, e.g., \cite[Problem~2.4.2]{ks}).
Let $F:\W \to \R $ be a measurable function 
and define the $\sigma $-finite measure $\nu _{F}$ on 
$(\W ,\mathcal{B}(\W ))$ by 
\begin{align*}
 d\nu _{F}:=(1+F_{-})\,d\wm .
\end{align*}
If $F$ fulfills \thetag{A2}, then $\nu _{F}$ is a finite measure 
and the following lemma is standard but crucial to our 
argument.
\begin{lem}\label{;ldense}
 $\calF C_{b}^{\infty }$ is dense in $L^{2}(\nu _{F})$ 
 under assumption \thetag{A2}.
\end{lem}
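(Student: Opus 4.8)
The plan is to prove density by showing that the orthogonal complement of $\calF C_{b}^{\infty }$ in the Hilbert space $L^{2}(\nu _{F})$ is trivial. Under \thetag{A2} the measure $\nu _{F}$ is finite, so every bounded function---in particular every element of $\calF C_{b}^{\infty }$---lies in $L^{2}(\nu _{F})$; write $\mathcal{H}$ for the closure of $\calF C_{b}^{\infty }$ in $L^{2}(\nu _{F})$, a closed subspace. It then suffices to show that any $g\in L^{2}(\nu _{F})$ with $\int _{\W }g\Phi \,d\nu _{F}=0$ for all $\Phi \in \calF C_{b}^{\infty }$ vanishes $\nu _{F}$-a.e.

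First I would transfer the problem from $\nu _{F}$ to $\wm $. Setting $h:=g(1+F_{-})$, the orthogonality relation becomes $\int _{\W }h\Phi \,d\wm =0$ for every $\Phi \in \calF C_{b}^{\infty }$. A Cauchy--Schwarz estimate, $\int _{\W }|h|\,d\wm =\int _{\W }|g|\sqrt{1+F_{-}}\cdot \sqrt{1+F_{-}}\,d\wm \le \nt{g}{L^{2}(\nu _{F})}\,\bigl( \nu _{F}(\W )\bigr) ^{1/2}$, shows $h\in L^{1}(\wm )$, so that $d\lambda :=h\,d\wm $ defines a finite signed measure on $(\W ,\mathcal{B}(\W ))$ that is annihilated by $\calF C_{b}^{\infty }$. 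Next I would upgrade this to the vanishing of $\lambda $ on every cylinder set. Given $C$ as in \eqref{;cylinder}, let $\rho $ be the image of the finite total-variation measure $|\lambda |$ under the evaluation map $w\mapsto \left( w(t_{1}),\ldots ,w(t_{m})\right) $, and choose uniformly bounded $\phi _{n}\in C_{b}^{\infty }(\R ^{d\times m})$ converging to $\ind _{\Gamma }$ outside a $\rho $-null set (such a sequence exists by the standard approximation of bounded Borel functions by smooth ones in $L^{1}(\rho )$, passing to an a.e.-convergent subsequence and truncating to preserve uniform boundedness). The functions $\Phi _{n}(w)=\phi _{n}\left( w(t_{1}),\ldots ,w(t_{m})\right) $ lie in $\calF C_{b}^{\infty }$, are uniformly bounded, and converge to $\ind _{C}$ $|\lambda |$-a.e., so dominated convergence against $|\lambda |$ yields $\lambda (C)=\lim _{n}\int _{\W }\Phi _{n}\,d\lambda =0$.

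Finally, since the class $\mathcal{C}$ of cylinder sets is a $\pi $-system containing $\W $ itself and generating $\mathcal{B}(\W )$ by \eqref{;sigma}, the vanishing of $\lambda $ on $\mathcal{C}$ forces $\lambda \equiv 0$: writing the Jordan decomposition $\lambda =\lambda ^{+}-\lambda ^{-}$, the two finite positive measures $\lambda ^{\pm }$ agree on $\mathcal{C}$ and hence, by the uniqueness theorem for measures determined on a generating $\pi $-system, on all of $\mathcal{B}(\W )$. Thus $h=0$ $\wm $-a.e., and since $1+F_{-}\ge 1$ this gives $g=0$ $\nu _{F}$-a.e., proving $\mathcal{H}=L^{2}(\nu _{F})$. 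The main delicacy is the transfer from the $\pi $-system $\mathcal{C}$ to the full $\sigma $-field for a signed rather than a positive measure, which is precisely where the Jordan decomposition and the integrability $h\in L^{1}(\wm )$---needed both for $\lambda $ to be a genuine finite signed measure and for the dominated convergence step---come into play; the pointwise smoothing of indicators is routine by comparison.
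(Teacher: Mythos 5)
Your proof is correct, but it takes a genuinely different route from the paper's. You argue by duality: you show the orthogonal complement of $\calF C_{b}^{\infty }$ in $L^{2}(\nu _{F})$ is trivial, transferring an orthogonal $g$ to the finite signed measure $d\lambda =g(1+F_{-})\,d\wm $ (finiteness via Cauchy--Schwarz and $\nu _{F}(\W )<\infty $, which is where \thetag{A2} enters), killing $\lambda $ on cylinder sets by a bounded smooth approximation of $\ind _{\Gamma }$ that converges a.e.\ with respect to the pushforward of $|\lambda |$, and then invoking the $\pi $-system uniqueness theorem through the Jordan decomposition. The paper instead proceeds constructively: it reduces to approximating $\ind _{A}$ directly in $L^{2}(\nu _{F})$, first replacing $A$ by a cylinder set via the approximation property for the finite measure $\nu _{F}$ on the generating algebra $\mathcal{C}$, then using inner regularity of the finite-dimensional marginal $\nu _{F}^{m}$ to pass to a compact $K\subset \Gamma $, and finally mollifying $\ind _{K}$. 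Your approach trades the approximation-in-measure and inner-regularity steps for the signed-measure uniqueness argument; it is slightly less explicit (it produces no approximating sequence for a given element) but is a clean and standard alternative. The only point you gloss over is the truncation of the smooth approximants to a uniformly bounded family while preserving smoothness (a naive $\min $/$\max $ truncation destroys $C^{\infty }$), but this is repaired by composing with a fixed smooth bounded function equal to the identity on $[0,1]$, or by choosing the continuous approximants between $0$ and $1$ from the outset via regularity and then mollifying; as you say, this is routine.
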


For the completeness of the paper, we give a proof.

\begin{proof}[Proof of \lref{;ldense}]
It suffices to show that, for any $A\in \cB (\W )$, its indicator 
function $\ind _{A}$ can be approximated by a sequence 
$\{ \Phi _{n}\} _{n=1}^{\infty }$ in 
$\calF C_{b}^{\infty }$ in $L^{2}(\nu _{F})$. To this end, fix a 
positive integer $n$ arbitrarily. In view of 
\eqref{;sigma}, the approximation property 
(e.g., \cite[Theorem~5.7]{bau}) entails that 
there exists $C_{}\in \cC $ such that 
\begin{align*}
 \nt{\ind _{A}-\ind _{C_{}}}{L^{2}(\nu _{F})}<n^{-1} 
\end{align*}
because of the fact that $\nu _{F}$ is a finite measure and 
$
 |\ind _{A}-\ind _{C_{}}|^{2}=\ind _{A\Delta C_{}}
$, 
where the symbol $\Delta $ stands for the symmetric difference. 
As $C_{}$ may be expressed as 
\eqref{;cylinder}, it is now routine to check that elements 
of $C_{b}^{\infty }(\R ^{d\times m})$ approximate $\ind _{\Gamma }$ 
in the sense of $L^{2}$ under the finite measure 
\begin{align*}
 \nu _{F}^{m}(\,\cdot \,)
 \equiv \nu _{F}^{t_{1},\ldots ,t_{m}}(\,\cdot \,):=\nu _{F}\!\left( 
 \left\{ w\in \W ;\,
 \left( w(t_{1}),\ldots ,w(t_{m})\right) \in \,\cdot \,
 \right\} 
 \right) 
\end{align*}
on $\R ^{d\times m}$. To see that, notice that $\nu _{F}^{m}$ is 
inner regular (cf.\ \cite[Lemma~26.2]{bau}). Hence there exists a 
compact subset $K$ of $\Gamma $ such that 
\begin{align*}
 \nt{\ind _{\Gamma }-\ind _{K}}{L^{2}(\nu _{F}^{m})}
 =\sqrt{\nu _{F}^{m}(\Gamma )-\nu _{F}^{m}(K)}<n^{-1}.
\end{align*}
Convoluting $\ind _{K}$ with the standard mollifier 
(\cite[Subsection~4.2.1]{eg}), we may construct a family 
$\{ \phi _{\ve }\} _{\ve >0}\subset C_{b}^{\infty }(\R ^{d\times m})$ 
(in fact, each $\phi _{\ve }$ is compactly supported) such that  
\begin{align*}
 \phi _{\ve }\to \ind _{K}\quad \text{a.e.\ as $\ve \downarrow 0$}. 
\end{align*}
Thanks to the finiteness of $\nu _{F}^{m}$, 
the above a.e.\ convergence entails that there exists 
$\phi \in C_{b}^{\infty }(\R ^{d\times m})$ such that 
\begin{align*}
 \nt{\ind _{K}-\phi }{L^{2}(\nu _{F}^{m})}<n^{-1}
\end{align*}
by the bounded convergence theorem. Therefore, setting 
\begin{align*}
 \Phi _{n}(w):=
 \phi \left( w(t_{1}),\ldots ,w(t_{m})\right) ,\quad w\in \W ,
\end{align*}
we have the desired sequence 
$\{ \Phi _{n}\} _{n=1}^{\infty }\subset \calF C_{b}^{\infty }$ 
because 
\begin{align*}
 \nt{\ind _{A}-\Phi _{n}}{L^{2}(\nu _{F})}<3n^{-1}
\end{align*}
for each $n$ by construction.
\end{proof}

Following the notation of \cite{leh}, we define 
\begin{align*}
 \cS :=\left\{ 
 \mu \in \cP (\W );\,
 \text{$\mu $ has a density $\Phi \in \calF C_{b}^{\infty }$ 
 w.r.t.\ $\wm $ such that $\inf _{w\in \W }\Phi (w)>0$}
 \right\} .
\end{align*}
The next lemma is also adapted from \cite{leh}. 

\begin{lem}[\cite{leh}, Theorem~7]\label{;lleh2}
 For every $\mu \in \cS $, there exists $\v \in \A $ such that 
 $\br ^{\v }$ has law $\mu $ and 
 \begin{align}\label{;min}
  H(\mu \mid \wm )=\frac{1}{2}\nt{\v }{\A }^{2}.  
 \end{align}
\end{lem}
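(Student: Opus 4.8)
The plan is to realize the minimizing drift as the \emph{F\"ollmer drift} attached to $\mu$ and then to verify the energy--entropy identity \eqref{;min} by It\^o calculus, the whole construction being made clean by the special structure of $\cS$. Write $d\mu/d\wm=\Phi$ with $\Phi\in\calF C_{b}^{\infty}$, say $\Phi(w)=\phi(w(t_{1}),\dots,w(t_{m}))$, and recall that $\inf_{w}\Phi(w)>0$ by the definition of $\cS$, while $\sup_{w}\Phi(w)<\infty$ since $\phi\in C_{b}^{\infty}(\R^{d\times m})$. First I would form the density martingale
\begin{align*}
 N_{t}:=\ex\!\left[\Phi(\br)\mid\calF^{\br}_{t}\right],\quad t\ge0,
\end{align*}
a positive martingale with $\inf\Phi\le N_{t}\le\sup\Phi$ and $N_{0}=1$. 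By the Markov property, $N_{t}$ equals, for $t\in[t_{i},t_{i+1})$, a smooth function $u(t,\br_{t_{1}},\dots,\br_{t_{i}},\br_{t})$ obtained by integrating $\phi$ against the Gaussian transition kernels of the remaining coordinates; a standard computation via It\^o's formula, using that $N$ is a martingale, gives $dN_{t}=\langle\nabla_{x}N_{t},d\br_{t}\rangle$, the gradient being in the current position variable. I then set
\begin{align*}
 b_{t}:=\frac{\nabla_{x}N_{t}}{N_{t}}=\nabla_{x}\log N_{t},
\end{align*}
so that $N=\exp\!\bigl(\int_{0}^{\cdot}\langle b_{s},d\br_{s}\rangle-\tfrac12\int_{0}^{\cdot}|b_{s}|^{2}\,ds\bigr)$. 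Since $\phi$ has bounded derivatives, $|\nabla_{x}N_{t}|\le\|\nabla\phi\|_{\infty}$, and as $N_{t}\ge\inf\Phi>0$ the drift $b$ is \emph{bounded}; moreover $N_{t}=\Phi(\br)$ and hence $b_{t}=0$ for $t\ge t_{m}$, so $b$ is supported on the compact interval $[0,t_{m}]$.

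The second step is to turn $b$, so far a functional of the path, into an element of $\A$ whose associated drifted motion has law $\mu$. Reading $b_{t}=b(t,X_{\cdot})$ as the above explicit function of finitely many past coordinates and the current position, I would solve, driven by $\br$ under $\pr$, the stochastic differential equation
\begin{align*}
 dX_{t}=b(t,X_{\cdot})\,dt+d\br_{t},\qquad X_{0}=0.
\end{align*}
On each interval $[t_{i},t_{i+1})$ the coefficient is bounded and Lipschitz in the current position (because $\log N$ is smooth with bounded spatial derivatives), with the earlier coordinates already frozen; solving successively on these intervals yields a unique strong solution $X$ adapted to $\br$. Putting $\v_{t}:=b(t,X_{\cdot})$ gives $X=\br^{\v}$, and $\v\in\A$ since $\v$ is bounded with $\v_{t}=0$ for $t\ge t_{m}$, so that $\nt{\v}{\A}^{2}=\ex[\int_{0}^{t_{m}}|b(t,X)|^{2}\,dt]<\infty$.

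To identify the law of $\br^{\v}$, I would apply Girsanov's theorem under $\pr$: the bounded, compactly supported drift makes $\exp(-\int_{0}^{t_{m}}\langle b(s,X),d\br_{s}\rangle-\tfrac12\int_{0}^{t_{m}}|b(s,X)|^{2}ds)$ an honest probability density, under the new measure of which $X$ is a Brownian motion; computing the reciprocal density as a functional of $X$ recognizes it as $\Phi(X)$, whence $\ex[g(X)]=\int_{\W}g\Phi\,d\wm=\int_{\W}g\,d\mu$ for every bounded measurable $g$, i.e.\ $\br^{\v}$ has law $\mu$.

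Finally, for \eqref{;min} I would pass to the canonical process $Y$ under $\mu$, where $\hat W_{t}:=Y_{t}-\int_{0}^{t}b(s,Y)\,ds$ is a Brownian motion by the same change of measure, so $dY=b\,dt+d\hat W$. Using $\log\Phi=\log N_{\infty}=\int_{0}^{t_{m}}\langle b,dY\rangle-\tfrac12\int_{0}^{t_{m}}|b|^{2}dt$ and substituting $dY=b\,dt+d\hat W$ gives $\log\Phi=\tfrac12\int_{0}^{t_{m}}|b|^{2}dt+\int_{0}^{t_{m}}\langle b,d\hat W\rangle$; since $b$ is bounded and supported on $[0,t_{m}]$, the stochastic integral is a mean-zero martingale, and taking expectations under $\mu$ yields $H(\mu\mid\wm)=\ex_{\mu}[\log\Phi]=\tfrac12\ex_{\mu}[\int_{0}^{t_{m}}|b|^{2}dt]=\tfrac12\nt{\v}{\A}^{2}$, the last expectation equalling $\ex[\int_{0}^{\infty}|\v_{t}|^{2}dt]$ because $\br^{\v}$ has law $\mu$. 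The main obstacle is precisely the passage from the path-functional $b$ to a genuine drift in $\A$ realizing $\mu$: one needs the \emph{output} $\br^{\v}$, not the input $\br$, to carry the drift, and this is resolved by the strong solvability of the above equation, which in turn rests entirely on the boundedness and smoothness of $b$ guaranteed by $\Phi\in\calF C_{b}^{\infty}$ with $\inf\Phi>0$.
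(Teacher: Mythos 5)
Your argument is correct and is essentially the proof the paper relies on: the lemma is imported from \cite{leh} (Theorem~7), and the construction sketched in \rref{;rmin} --- the F\"ollmer drift $u(t,X)$ read along the unique strong solution $X$ of $dX_{t}=d\br _{t}+u(t,X)\,dt$, which lies in $\A _{b,0}$ with the same bound $|\v _{t}|\le (\inf \phi )^{-1}\sum _{i}\sup |\nabla _{x^{i}}\phi |$ --- is exactly the drift you build from the density martingale $N_{t}$. Your identification of the law via Girsanov and your derivation of \eqref{;min} by expanding $\log \Phi $ along $dY=b\,dt+d\hat{W}$ under $\mu $ reproduce Lehec's computation, so there is nothing to add.
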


\begin{rem}\label{;rmin}
With $u:[0,\infty )\times \W \to \W $ the {\it F\"ollmer process} 
associated with $\mu $, as constructed in the proof of 
\cite[Theorem~2]{leh}, one of $\v $'s fulfilling \eqref{;min} is given 
by $\v =\{ u(t,X)\} _{t\ge 0}$, where $X=\{ X_{t}\} _{t\ge 0}$ is the 
unique strong solution to the stochastic differential equation 
\begin{align*}
 dX_{t}=d\br _{t}+u(t,X)\,dt,\quad t\ge 0,\ X_{0}=0.
\end{align*}
The above choice of $\v $ is in $\A _{b,0}$; 
indeed, supposing that 
$\mu \in \cS $ has density $\Phi $ given by \eqref{;fcbi}, 
we see that 
\begin{align*}
 |\v _{t}|\le 
 \frac{1}{\inf \limits_{x\in \R ^{d\times m}}\phi (x)}
 \sum _{i=1}^{m}\sup _{x\in \R ^{d\times m}}\left| 
 \nabla _{x^{i}}\phi (x)
 \right| \quad \text{a.s.}
\end{align*}
for $0\le t\le t_{m}$ and $\v _{t}=0$ for $t>t_{m}$ 
by construction. Here, for each 
$1\le i\le m$, $\nabla _{x^{i}}\phi $ is the gradient of 
$\phi (x)\equiv \phi (x^{1},\ldots ,x^{m})$ with respect to 
the variable $x^{i}\in \R ^{d}$.
\end{rem}

Combining \lsref{;ldense} and \ref{;lleh2}, we immediately obtain 
\begin{prop}\label{;pubba}
 The upper bound \eqref{;eqpub} holds for any measurable 
 function $F:\W \to \R $ that is bounded from above and 
 satisfies \thetag{A2}. 
\end{prop}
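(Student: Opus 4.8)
The plan is to use \lref{;lleh2} to reduce \eqref{;eqpub} to a Gibbs-type variational inequality over the class $\cS$, and then to establish that inequality with the help of \lref{;ldense}. Write $J_{F}(\mu):=\int_{\W}F\,d\mu-H(\mu\mid\wm)$ for $\mu\in\cP(\W)$. Given $\mu\in\cS$, \lref{;lleh2} furnishes a drift $\v\in\A$ for which $\br^{\v}$ has law $\mu$ and $\tfrac12\nt{\v}{\A}^{2}=H(\mu\mid\wm)$; since $F$ is bounded from above and $\mu$ has a bounded density, $\int_{\W}F\,d\mu=\ex[F(\br^{\v})]$ is finite by \thetag{A2}, and therefore
\[
 J_{F}(\mu)=\ex[F(\br^{\v})]-\tfrac12\nt{\v}{\A}^{2}\le\sup_{\v\in\A}\left\{\ex[F(\br^{\v})]-\tfrac12\nt{\v}{\A}^{2}\right\}.
\]
Taking the supremum over $\mu\in\cS$, the desired bound \eqref{;eqpub} follows once I prove $\lhs{F}\le\sup_{\mu\in\cS}J_{F}(\mu)$; this is the only substantial point.

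After subtracting the finite constant $\sup_{\W}F$ from $F$, which preserves \thetag{A2} and shifts both sides of \eqref{;eqpub} equally, I may assume $F\le0$, so that \thetag{A1} holds automatically. Set $Z:=\int_{\W}e^{F}\,d\wm\in(0,1]$, $g:=e^{F}/Z$ and $\mu^{*}:=g\,\wm$. Since $\log g=F-\log Z$, a direct computation gives $J_{F}(\mu^{*})=\log Z=\lhs{F}$ and $H(\mu^{*}\mid\wm)<\infty$; in fact $\mu^{*}$ maximizes $J_{F}$ over $\cP(\W)$. The point, and the reason the hypothesis ``$F$ bounded from above'' is used, is that $g$ is then bounded from above as well, namely $g\le Z^{-1}$.

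The approximation of $\mu^{*}$ by elements of $\cS$ proceeds through \lref{;ldense}. First, $\sqrt{g}\in L^{2}(\nu_{F})$ because $\int_{\W}g(1+F_{-})\,d\wm=1+Z^{-1}\int_{\W}e^{F}F_{-}\,d\wm<\infty$ by $e^{F}\le1$ and \thetag{A2}. Hence there are $\psi_{n}\in\calF C_{b}^{\infty}$ with $\psi_{n}\to\sqrt{g}$ in $L^{2}(\nu_{F})$, and by a smooth truncation together with a small positive shift I may arrange $0<\delta_{n}\le\psi_{n}\le C$ for a constant $C$, keeping $\psi_{n}\in\calF C_{b}^{\infty}$. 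Then $\Phi_{n}:=\psi_{n}^{2}\big/\int_{\W}\psi_{n}^{2}\,d\wm$ is a density in $\calF C_{b}^{\infty}$ bounded away from $0$, so $\mu_{n}:=\Phi_{n}\,\wm\in\cS$, and it is uniformly bounded above. Since $\nu_{F}\ge\wm$, the convergence $\psi_{n}\to\sqrt{g}$ holds also in $L^{2}(\wm)$, whence $\int_{\W}\psi_{n}^{2}\,d\wm\to1$ and $\Phi_{n}\to g$ in $L^{1}(\wm)$, and along a subsequence $\wm$-a.e.

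It remains to verify $J_{F}(\mu_{n})\to\log Z$, which I expect to be the main obstacle: the entropy $\mu\mapsto H(\mu\mid\wm)$ is only lower semicontinuous, the wrong direction for bounding $\limsup_{n}H(\mu_{n}\mid\wm)$ from above. This is resolved precisely by the uniform upper bound on $\Phi_{n}$ coming from $g\le Z^{-1}$: it renders $\Phi_{n}\log\Phi_{n}$ uniformly bounded, so the bounded convergence theorem yields $\int_{\W}\Phi_{n}\log\Phi_{n}\,d\wm\to\int_{\W}g\log g\,d\wm=H(\mu^{*}\mid\wm)$. Similarly $\int_{\W}F\Phi_{n}\,d\wm=-\int_{\W}F_{-}\Phi_{n}\,d\wm\to\int_{\W}F\,d\mu^{*}$ by dominated convergence with dominating function a multiple of $F_{-}\in L^{1}(\wm)$. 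Hence $J_{F}(\mu_{n})\to J_{F}(\mu^{*})=\log Z$, giving $\lhs{F}\le\sup_{\mu\in\cS}J_{F}(\mu)$ and completing the proof.
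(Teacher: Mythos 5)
Your proof is correct and follows essentially the same route as the paper's: both reduce \eqref{;eqpub} to the inequality $\lhs{F}\le \sup_{\mu \in \cS}\left\{ \int_{\W}F\,d\mu -H(\mu \mid \wm )\right\}$ via \lref{;lleh2} and then use \lref{;ldense} to approximate the tilted measure $Z^{-1}e^{F}\,d\wm$ (with $Z=\int_{\W}e^{F}\,d\wm$) by elements of $\cS$. The only substantive variation is the convergence step: you approximate the square root of the density and truncate so that the resulting densities are uniformly bounded, allowing bounded/dominated convergence, whereas the paper approximates $G=e^{F}$ itself and controls $\int_{\W}G_{n}\log G_{n}\,d\wm$ through $L^{2}(\wm )$-boundedness, uniform integrability and Vitali's convergence theorem.
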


\begin{proof}
Set $G:=e^{F}$. Without loss of generality, we may assume 
$\nt{G}{L^{1}(\wm )}=1$. As $G\in L^{2}(\nu _{F})$ thanks to the 
boundedness of $G$, there exists a sequence 
$\{ \Phi _{n}\} _{n=1}^{\infty }\subset \calF C_{b}^{\infty }$ 
such that 
\begin{align}\label{;l2}
 \lim _{n\to \infty }\nt{\Phi _{n}-G}{L^{2}(\nu _{F})}=0
\end{align}
by \lref{;ldense}. For every $n$, truncating $\Phi _{n}$ if necessary, 
we may assume $\inf \limits_{w\in \W }\Phi _{n}(w)>0$. 
For each $n$, define $G_{n}:=\Phi _{n}/\nt{\Phi _{n}}{L^{1}(\wm )}$ 
so that $d\mu _{n}:=G_{n}\,d\wm $ is in $\cS $. It is clear that 
\begin{align}\label{;l2d}
 \lim _{n\to \infty }\nt{G_{n}-G}{L^{2}(\nu _{F})}=0
\end{align}
by \eqref{;l2}; indeed, 
\begin{align*}
 \nt{G_{n}-G}{L^{2}(\nu _{F})}&\le 
 \frac{1}{\nt{\Phi _{n}}{L^{1}(\wm )}}
 \nt{\Phi _{n}-G}{L^{2}(\nu _{F})}
 +\left| 
 \frac{1}{\nt{\Phi _{n}}{L^{1}(\wm )}}-1
 \right| \nt{G}{L^{2}(\nu _{F})}, 
\end{align*}
which tends to $0$ because \eqref{;l2} also entails that 
$
\lim \limits_{n\to \infty }\nt{\Phi _{n}}{L^{1}(\wm )}
=\nt{G}{L^{1}(\wm )}=1
$. As $\{ G_{n}\} _{n=1}^{\infty }$ is bounded in 
$L^{2}(\wm )$ by \eqref{;l2d} and the definition of $\nu _{F}$, 
the sequence $\{ G_{n}\log G_{n}\} _{n=1}^{\infty }$ is 
uniformly integrable under $\wm $, whence, by Vitali's convergence 
theorem (see, e.g., \cite[Theorem~22.7]{sch}), 
\begin{align}\label{;lim1}
 \lim _{n\to \infty }\int _{\W }G_{n}\log G_{n}\,d\wm 
 =\int _{\W }G_{}\log G_{}\,d\wm 
\end{align}
because \eqref{;l2d} also implies $G_{n}\to G$ in probability 
under $\wm $. Moreover, it follows that 
\begin{align*}
 \lim _{n\to \infty }\int _{\W }G_{n}F_{-}\,d\wm 
 =\int _{\W }G_{}F_{-}\,d\wm .
\end{align*}
Since $\{ G_{n}\} _{n=1}^{\infty }$ also converges to $G$ in 
$L^{1}(\wm )$ and $F_{+}$ is bounded, we have 
\begin{align*}
 \lim _{n\to \infty }\int _{\W }G_{n}F_{+}\,d\wm 
 =\int _{\W }G_{}F_{+}\,d\wm 
\end{align*}
as well, and hence 
\begin{align}\label{;lim2}
 \lim _{n\to \infty }\int _{\W }FG_{n}\,d\wm =\int _{\W }FG\,d\wm .
\end{align}
Combining \eqref{;lim1} and 
\eqref{;lim2}, we see that 
\begin{align*}
 \int _{\W }F\,d\mu _{n}-H(\mu _{n}\mid \wm )
 &=\int _{\W }FG_{n}\,d\wm -\int _{\W }G_{n}\log G_{n}\,d\wm \\
 &\xrightarrow[n\to \infty ]{}0
\end{align*}
by the definition of $G$. 
Therefore, for any $\ve >0$, there exists $\mu \in \cS $ such that, 
because of $\ex \!\left[ e^{F(\br )}\right] =1$,  
\begin{align*}
 \log \ex \!\left[ e^{F(\br )}\right] 
 <\int _{\W }F\,d\mu -H(\mu \mid \wm )+\ve .
\end{align*}
The right-hand side is dominated by 
\begin{align}\label{;dom}
 \sup _{\v \in \A }\left\{ 
 \ex \!\left[ F\!\left( 
 \br ^{\v }
 \right) 
 \right] -\frac{1}{2}\nt{\v }{\A }^{2}
 \right\} +\ve 
\end{align}
in view of \lref{;lleh2}, which proves the proposition 
as $\ve >0$ is arbitrary. 
\end{proof}

\begin{rem}
If we let $\v _{n}\in \A $ be as in \rref{;rmin} for each $\mu _{n}$, 
what is in fact proven is 
\begin{align*}
 \log \ex \!\left[ e^{F(\br )}\right] 
 =\lim _{n\to \infty }\left\{ 
 \ex \!\left[ F(\br ^{\v _{n}})\right] -\frac{1}{2}\nt{\v _{n}}{\A }^{2}
 \right\} .
\end{align*}
\end{rem}

We finish the proof of \pref{;pub}.  
\begin{proof}[Proof of \pref{;pub}]
 For a measurable function $F:\W \to \R $ satisfying \thetag{A1} and 
 \thetag{A2}, we set for each $M>0$, 
 \begin{align*}
  F_{M}(w):=F(w)\wedge M, \quad w\in \W . 
 \end{align*}
 Then, for any $M$, we have by \pref{;pubba}, 
 \begin{align*}
  \lhs{F_{M}}&\le 
  \sup _{\v \in \A }\left\{ 
  \ex \!\left[ F_{M}\!\left( 
  \br ^{\v }
  \right) 
  \right] -\frac{1}{2}\nt{\v }{\A }^{2}
  \right\} \\
  &\le \sup _{\v \in \A }\left\{ 
 \ex \!\left[ F\!\left( 
 \br ^{\v }
 \right) 
 \right] -\frac{1}{2}\nt{\v }{\A }^{2}
 \right\} , 
 \end{align*}
the last expression being well-defined by \eqref{;fp}. 
Letting $M\to \infty $ on the leftmost side completes the proof 
by the dominated/monotone convergence theorem. 
\end{proof}

Since domination \eqref{;dom} is valid 
if we replace the supremum over $\A $ by that over $\A _{b}$ 
or $\A _{b,0}$ in view of \rref{;rmin}, 
we have the following corollary, which we think is 
useful in some of applications; see, e.g., 
\cite[Remarks~4.8 and 4.9]{cgw}.

\begin{cor}\label{;ctmain1}
The supremum in \eqref{;vr1} may be replaced by that over 
drifts $\v $ in $\A _{b}$ or $\A _{b,0}$; that is, for any measurable 
function $F:\W \to \R $ satisfying \thetag{A1} and \thetag{A2}, 
we have 
\begin{align*}
  \log \ex \!\left[ 
 e^{F(\br )}
 \right] 
 &=\sup _{\v \in \A _{b}}\left\{ 
 \ex \!\left[ F\!\left( 
 \br ^{\v }
 \right) 
 \right] -\frac{1}{2}\nt{\v }{\A }^{2}
 \right\} \\
 &=\sup _{\v \in \A _{b,0}}\left\{ 
 \ex \!\left[ F\!\left( 
 \br ^{\v }
 \right) 
 \right] -\frac{1}{2}\nt{\v }{\A }^{2}
 \right\} . 
 \end{align*}
\end{cor}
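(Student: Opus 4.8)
The plan is to show that the proof of the upper bound carries over verbatim with $\A $ replaced by $\A _{b,0}$, and then to sandwich the three quantities. Since $\A _{b,0}\subset \A _{b}\subset \A $, monotonicity of the supremum gives
\begin{align*}
 \sup _{\v \in \A _{b,0}}\left\{ \ex \!\left[ F(\br ^{\v })\right] -\tfrac{1}{2}\nt{\v }{\A }^{2}\right\}
 \le \sup _{\v \in \A _{b}}\left\{ \ex \!\left[ F(\br ^{\v })\right] -\tfrac{1}{2}\nt{\v }{\A }^{2}\right\}
 \le \sup _{\v \in \A }\left\{ \ex \!\left[ F(\br ^{\v })\right] -\tfrac{1}{2}\nt{\v }{\A }^{2}\right\} ,
\end{align*}
whose rightmost member equals $\log \ex [e^{F(\br )}]$ by \tref{;tmain1}. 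Thus it suffices to establish the single reverse inequality $\log \ex [e^{F(\br )}]\le \sup _{\v \in \A _{b,0}}\{ \ex [F(\br ^{\v })]-\tfrac{1}{2}\nt{\v }{\A }^{2}\} $, since this pins all three suprema to the common value $\log \ex [e^{F(\br )}]$.

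To obtain that reverse inequality I would first sharpen \pref{;pubba}. Inspecting its proof, the drifts that realize the domination \eqref{;dom} are produced from the measures $\mu _{n}\in \cS $ via \lref{;lleh2}; and by \rref{;rmin} the particular drift attached to each $\mu \in \cS $ — namely $\v =\{ u(t,X)\} _{t\ge 0}$ built from the F\"ollmer process — in fact lies in $\A _{b,0}$. Consequently the bound \eqref{;dom} remains valid with the supremum taken over $\A _{b,0}$ in place of $\A $, so that for every $F$ bounded from above and satisfying \thetag{A2} one has
\begin{align*}
 \log \ex \!\left[ e^{F(\br )}\right] \le \sup _{\v \in \A _{b,0}}\left\{ \ex \!\left[ F(\br ^{\v })\right] -\tfrac{1}{2}\nt{\v }{\A }^{2}\right\} .
\end{align*}

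Finally I would propagate this to general $F$ exactly as in the proof of \pref{;pub}. Writing $F_{M}:=F\wedge M$ and applying the sharpened \pref{;pubba} to $F_{M}$ gives $\log \ex [e^{F_{M}(\br )}]\le \sup _{\v \in \A _{b,0}}\{ \ex [F_{M}(\br ^{\v })]-\tfrac{1}{2}\nt{\v }{\A }^{2}\} $; since $F_{M}\le F$ and each expectation is well-defined by \eqref{;fp} (note $\A _{b,0}\subset \A $), this is in turn at most $\sup _{\v \in \A _{b,0}}\{ \ex [F(\br ^{\v })]-\tfrac{1}{2}\nt{\v }{\A }^{2}\} $, a quantity independent of $M$. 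Letting $M\to \infty $ and using monotone convergence on the left (justified by \thetag{A1}) yields the desired reverse inequality, completing the proof. The one point requiring care — and really the crux of the whole argument — is \rref{;rmin}: one must verify that the F\"ollmer drift attached to a measure in $\cS $ is not merely in $\A $ but is bounded and supported on a finite time interval. Once this membership in $\A _{b,0}$ is granted, everything else is a routine repetition of the truncation scheme already used to pass from \pref{;pubba} to \pref{;pub}.
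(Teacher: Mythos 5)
Your argument is correct and is essentially the paper's own: the authors likewise observe that the drift furnished by \lref{;lleh2} via \rref{;rmin} lies in $\A _{b,0}$, so the domination \eqref{;dom} (and hence the upper bound, after the same truncation $F\wedge M$) survives with the supremum restricted to $\A _{b,0}$, while the inclusions $\A _{b,0}\subset \A _{b}\subset \A $ and \tref{;tmain1} close the sandwich. No gaps.
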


We end this section with a remark on the proof of \tref{;tmain1} and 
related facts. 
\begin{rem}\label{;rptmain1}
\thetag{1}\ Since both sides of \eqref{;vr1} are well-defined
only under assumption \thetag{A1} as noted in 
\rref{;rtmain1}\thetag{1}, it is plausible that formula 
\eqref{;vr1} holds true without any assumptions on 
$F$ from below; however, we have not succeeded in proving it. 
The difficulty is to prove the upper bound \eqref{;eqpub} 
without assuming \thetag{A2}.

\noindent 
\thetag{2} Using the notion of filtrations introduced by 
\"Ust\"unel and Zakai \cite{uz} on abstract Wiener spaces, 
Zhang \cite{zha} extended formula~\eqref{;vr0} of 
Bou\'e--Dupuis for bounded Wiener functionals to the 
framework of abstract Wiener spaces as simplifying 
the original proof of the upper bound which relied on a 
complicated measurable selection argument. 
As for the case of the Wiener space 
$(\W ,\wm )$, Lehec \cite{leh} further simplified 
the proof of the upper bound, based on deep 
analysis of the Gaussian relative entropy 
as exhibited in \lsref{;lleh1} and \ref{;lleh2}.  
Note that Lehec's extension \cite[Theorem~9]{leh} to the 
case with $F(\br )$ a functional of $\br $, assumed 
bounded from below, over the whole time 
interval may be seen as a particular case of 
Zhang's result \cite[Theorem~3.2]{zha}; indeed, as 
discussed in \cite[Section~8.1]{str}, by restricting 
$\wm $ to the Banach space $\tilde{\W }$ consisting of paths 
$w \in \W $ such that $\lim \limits_{t\to \infty }|w(t)|/t=0$ 
normed by 
$\sup \limits_{t\ge 0}|w(t)|/(1+t)$, the triple $(\tilde{\W },\H ,\wm )$ 
forms an abstract 
Wiener space, where $\H $ is the usual Cameron--Martin subspace 
of $\W $. 

\noindent 
\thetag{3} One of the main differences between Lehec's proof and 
ours is that we appeal to the density of $\calF C_{b}^{\infty }$ in 
$L^{2}(\nu _{F})$ instead of $L^{2}(\wm )$; another is the employment 
of Vitali's convergence theorem in \eqref{;lim1}.
\end{rem}

\section{Application to the Ornstein--Uhlenbeck semigroup}\label{;sappl}

In this section, we explore a connection between 
formula~\eqref{;vr1} and the exponential version of the 
hypercontractivity of the Ornstein--Uhlenbeck semigroup 
in $\R ^{d}$. For this purpose, we begin with restating \tref{;tmain1} 
when the functional $F(\br )$ is a function of $\br _{1}$. 

We consider the set of $d$-dimensional 
$\{ \calF ^{\br }_{t}\} $-progressively measurable processes 
$\v =\{ \v _{t}\} _{0\le t\le 1}$ satisfying 
\begin{align*}
 \ex \!\left[ \int _{0}^{1}|\v _{t}|^{2}\,dt\right] <\infty ;
\end{align*}
in order to specify notationally that $\v _{t}$ is a functional of $\br $ 
up to time $t$ and the terminal time is $1$, we denote this set 
by $\A _{1}(\br )$. Let $\ga $ denote the standard Gaussian measure 
on $\R ^{d}$ and $f:\R ^{d}\to \R $ be a measurable function.
Noting that conditions~\thetag{A1} and \thetag{A2} are 
equivalent to both $e^{F}$ and $F$ being in $L^{1}(\wm )$ 
(see \rref{;rtmain1}\thetag{3}), we assume 
\begin{align*}
 \thetag{B}\ \text{$e^{f}\in L^{1}(\ga )$ and $f\in L^{1}(\ga )$}.
\end{align*}
The following is immediate from \tref{;tmain1} applied to 
$F(\br )=f(\br _{1})$: 

\begin{prop}\label{;prestate}
 Under assumption \thetag{B}, we have 
 \begin{align}\label{;vrr}
 \log \ex \!\left[ e^{f(\br _{1})}\right] 
 =\sup _{\v \in \A _{1}(\br )}\ex \!\left[ 
 f\left( \br _{1}+\int _{0}^{1}\v _{t}\,dt\right) 
 -\frac{1}{2}\int _{0}^{1}|\v _{t}|^{2}\,dt
 \right] . 
\end{align}
\end{prop}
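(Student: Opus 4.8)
The plan is to obtain \pref{;prestate} as a direct specialization of \tref{;tmain1} to the functional $F(w)=f(w(1))$, $w\in \W $, followed by a short argument showing that only the restriction of a drift to $[0,1]$ can influence the resulting expression. First I would record that $F(w)=f(w(1))$ is measurable on $\W $, being the composition of the continuous evaluation map $w\mapsto w(1)$ with the measurable function $f$. Next I would translate assumption \thetag{B} into \thetag{A1} and \thetag{A2} for this $F$: since the coordinate $w\mapsto w(1)$ pushes $\wm $ forward to the standard Gaussian $\ga $ on $\R ^{d}$ (equivalently, $\br _{1}$ has law $\ga $ under $\pr $), we have $\int _{\W }e^{F}\,d\wm =\int _{\R ^{d}}e^{f}\,d\ga $ and $\int _{\W }F_{-}\,d\wm =\int _{\R ^{d}}f_{-}\,d\ga \le \int _{\R ^{d}}|f|\,d\ga $, so \thetag{B} yields exactly \thetag{A1} and \thetag{A2}.

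With \thetag{A1} and \thetag{A2} in force, \tref{;tmain1} applies and gives, because $\br ^{\v }_{1}=\br _{1}+\int _{0}^{1}\v _{t}\,dt$,
\begin{align*}
 \log \ex \!\left[ e^{f(\br _{1})}\right]
 =\sup _{\v \in \A }\ex \!\left[
 f\!\left( \br _{1}+\int _{0}^{1}\v _{t}\,dt\right)
 -\frac{1}{2}\int _{0}^{\infty }|\v _{t}|^{2}\,dt
 \right] .
\end{align*}
The one genuinely substantive step is then to reduce the supremum over $\A $, which carries the penalty $\frac{1}{2}\int _{0}^{\infty }$, to the supremum over $\A _{1}(\br )$, which carries the penalty $\frac{1}{2}\int _{0}^{1}$.

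The key observation is that the reward $f(\br _{1}+\int _{0}^{1}\v _{t}\,dt)$ depends on $\v $ only through its restriction to $[0,1]$. Hence, given any $\v \in \A $, its truncation $\tilde{\v }_{t}:=\v _{t}\ind _{[0,1]}(t)$ is again in $\A $, leaves the reward unchanged, and has penalty $\frac{1}{2}\int _{0}^{1}|\v _{t}|^{2}\,dt\le \frac{1}{2}\int _{0}^{\infty }|\v _{t}|^{2}\,dt$; so passing from $\v $ to $\tilde{\v }$ never decreases the bracketed quantity, and the supremum over $\A $ coincides with the supremum over those drifts vanishing on $(1,\infty )$. Restriction to $[0,1]$ puts such drifts in bijection with $\A _{1}(\br )$, under which $\int _{0}^{\infty }|\v _{t}|^{2}\,dt=\int _{0}^{1}|\v _{t}|^{2}\,dt$, and this delivers precisely formula \eqref{;vrr}. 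I expect no serious obstacle here: the only care needed is in checking that the truncated drift remains $\{ \calF ^{\br }_{t}\} $-progressively measurable and square-integrable (both immediate, since multiplication by a deterministic indicator preserves progressive measurability and only lowers the $L^{2}$-norm), together with the routine identification of time-$1$-supported drifts with $\A _{1}(\br )$.
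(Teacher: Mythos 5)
Your proposal is correct and follows exactly the route the paper intends: the paper simply declares \pref{;prestate} ``immediate from \tref{;tmain1} applied to $F(\br )=f(\br _{1})$'', and you have supplied the routine details (the transfer of \thetag{B} into \thetag{A1}--\thetag{A2} via the Gaussian law of $\br _{1}$, and the reduction from $\A $ to $\A _{1}(\br )$ by truncating drifts beyond time $1$, which leaves the reward unchanged and can only lower the penalty). Nothing further is needed.
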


Next we recall the exponential hypercontractivity of the 
Ornstein--Uhlenbeck semigroup 
$\ou =\{ \ou _{t}\} _{t\ge 0}$  
defined in the Gaussian space $(\R ^{d},\ga )$. 

For each $t\ge 0$, the operator $\ou _{t}$ acts on $L^{1}(\ga )$ 
in such a way that, for $f\in L^{1}(\ga )$, 
\begin{align*}
 \left( \ou _{t}f\right) \!(x)
 =\int _{\R ^{\D }}
 f\left( e^{-t}x+\sqrt{1-e^{-2t}}y\right) \gss{}(dy), \quad x\in \R ^{\D }. 
\end{align*}
It is well known that $\ou $ enjoys the hypercontractivity, which 
is also known (see \cite[Proposition~4]{be}) to be equivalent to 
the following property that we call the 
{\it exponential hypercontractivity}: for any measurable function 
$f:\R ^{d}\to \R $ satisfying \thetag{B}, 
\begin{align}\label{;eHC}
 \nt{\exp (\ou _{t}f)}{L^{e^{2t}}(\ga )}\le \nt{e^{f}}{L^{1}(\ga )}\quad 
 \text{for all }t\ge 0.
\end{align}
We provide a simple derivation of \eqref{;eHC} 
by means of \pref{;prestate};  formula~\eqref{;vrr} for any 
bounded measurable function $f$ was discovered by Borell \cite{bor} 
independently of Bou\'e--Dupuis \cite{bd} and applied to a simple 
proof of the Pr\'ekopa--Leindler inequality among others.
Our application, which seems to be new to our knowledge, serves 
as another instance of usefulness of the formula, often referred to 
as {\it Borell's formula}, in deriving existing functional inequalities.

Let $f\in L^{1}(\ga )$ and observe the following identity in law 
for every $t\ge 0$: 
\begin{align*}
 \left( 
 \ou _{t}f,\,\ga 
 \right) \stackrel{(d)}{=}
 \left( 
 \ex \!\left[ 
 f(\br _{1})\mid \calF ^{\br }_{e^{-2t}}
 \right] ,\,\pr 
 \right) .
\end{align*}
Indeed, by the independence of $\br _{1}-\br _{e^{-2t}}$ and 
$\br _{e^{-2t}}$, we have, a.s.,
\begin{align*}
 \ex \!\left[ 
 f(\br _{1})\mid \calF ^{\br }_{e^{-2t}}
 \right] 
 =\ex \!\left[ 
 f\!\left( 
 \br _{1}-\br _{e^{-2t}}+x
 \right) 
 \right] \!\big| _{x=\br _{e^{-2t}}}, 
\end{align*}
which has the same law as 
\begin{align*}
 \ex \bigl[ 
 f\bigl( 
 \sqrt{1-e^{-2t}}N_{2}+e^{-t}x
 \bigr) 
 \bigr] \Big| _{x=N_{1}},
\end{align*}
where $N_{1}$ and $N_{2}$ are $d$-dimensional standard Gaussian 
random variables. Therefore the exponential hypercontractivity 
\eqref{;eHC} is equivalently stated as 

\begin{prop}\label{;preHC}
 For every measurable function $f:\R ^{d}\to \R $ satisfying 
 \thetag{B}, it holds that 
 \begin{align}\label{;reHC}
  t\log \ex \!\left[ 
  \exp \left\{ 
  t^{-1}\ex \!\left[ 
  f(\br _{1})\mid \calF ^{\br }_{t}
  \right] 
  \right\} 
  \right] 
  \le \log \ex \!\left[ 
  e^{f(\br _{1})}
  \right] 
 \end{align}
 for all $0<t\le 1$.
\end{prop}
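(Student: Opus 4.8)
The plan is to derive \eqref{;reHC} from Borell's formula \pref{;prestate}, used as an \emph{equality} to rewrite the left-hand side, together with the lower bound \pref{;plb}, used to control the right-hand side; the decisive points are a Brownian scaling that produces the correct quadratic penalty and the realization of the heat smoothing by an independent Brownian motion. I would first prove \eqref{;reHC} for bounded $f$ and then pass to general $f$ satisfying \thetag{B} by truncating $f_{M}:=(f\wedge M)\vee (-M)$: the right-hand side converges by dominated convergence since $e^{f_{M}}\le 1+e^{f}$, while the left-hand side is handled by Fatou's lemma using that $\ex [f_{M}(\br _{1})\mid \calF ^{\br }_{t}]\to \ex [f(\br _{1})\mid \calF ^{\br }_{t}]$ a.s. (conditional dominated convergence, $|f_{M}|\le |f|\in L^{1}(\ga )$).

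So assume $f$ bounded and let $P_{s}$ denote the heat semigroup on $\R ^{d}$, so that $\ex [f(\br _{1})\mid \calF ^{\br }_{t}]=(P_{1-t}f)(\br _{t})$ by the Markov property and the exponent is $\Phi :=t^{-1}(P_{1-t}f)(\br _{t})$. Using Brownian scaling I set $\beta _{r}:=t^{-1/2}\br _{tr}$ for $0\le r\le 1$, a standard Brownian motion with $\calF ^{\beta }_{r}=\calF ^{\br }_{tr}$ and $\br _{t}=\sqrt{t}\,\beta _{1}$, so that $\Phi =h(\beta _{1})$ with $h(x):=t^{-1}(P_{1-t}f)(\sqrt{t}\,x)$. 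Applying \pref{;prestate} to the bounded function $h$ and the Brownian motion $\beta $, then multiplying by $t$ and performing a linear rescaling of the drift, I expect to reach
\begin{align*}
 t\log \ex \!\left[ e^{\Phi }\right]
 =\sup _{w}\ex \!\left[
 (P_{1-t}f)\!\left( \sqrt{t}\,\beta _{1}+\int _{0}^{1}w_{r}\,dr\right)
 -\frac{1}{2}\int _{0}^{1}|w_{r}|^{2}\,dr
 \right] ,
\end{align*}
the supremum being over $\calF ^{\beta }$-adapted drifts $w$ on $[0,1]$ of finite energy. It is essential to scale \emph{time} rather than space here: this is what turns the penalty $\tfrac{t}{2}\int _{0}^{t}|v|^{2}$ produced directly by $\Phi $ into $\tfrac12\int _{0}^{1}|w|^{2}$; keeping the spurious factor $t$ would push the target inequality in the wrong direction.

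The remaining step realizes the smoothing $P_{1-t}$ by fresh randomness. Let $\eta $ be a standard Brownian motion on $[0,1]$ independent of $\beta $, and put $\tilde{\br }_{r}:=\sqrt{t}\,\beta _{r}+\sqrt{1-t}\,\eta _{r}$; a covariance computation shows $\tilde{\br }$ is a standard Brownian motion for the filtration $\calF ^{\beta }_{r}\vee \calF ^{\eta }_{r}$, with $\tilde{\br }_{1}\stackrel{(d)}{=}\br _{1}$, and every $\calF ^{\beta }$-adapted $w$ is adapted to this larger filtration. Since $\sqrt{t}\,\beta _{1}+\int _{0}^{1}w_{r}\,dr$ is $\calF ^{\beta }_{1}$-measurable while $\sqrt{1-t}\,\eta _{1}$ is an independent $\gss{1-t}$-variable, I obtain $(P_{1-t}f)(\sqrt{t}\,\beta _{1}+\int _{0}^{1}w_{r}\,dr)=\ex [f(\tilde{\br }_{1}+\int _{0}^{1}w_{r}\,dr)\mid \calF ^{\beta }_{1}]$, so each term in the display equals $\ex [f(\tilde{\br }_{1}+\int _{0}^{1}w_{r}\,dr)-\tfrac12\int _{0}^{1}|w_{r}|^{2}\,dr]$. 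Applying the lower bound \pref{;plb} to the functional $f(\tilde{\br }_{1})$ then bounds this by $\log \ex [e^{f(\tilde{\br }_{1})}]=\log \ex [e^{f(\br _{1})}]$, and taking the supremum over $w$ yields \eqref{;reHC}.

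The main obstacle I anticipate is justifying that \pref{;plb} may be invoked with $w$ adapted to the enlarged filtration $\calF ^{\beta }\vee \calF ^{\eta }$ rather than to $\tilde{\br }$'s own filtration. This is legitimate because the proof of \pref{;plb} relies only on \lref{;lleh1}, whose Girsanov-type argument needs solely that $\tilde{\br }$ be a Brownian motion with respect to the filtration carrying $w$; I would record this remark explicitly. The only other delicate point, the finiteness of $\ex [e^{\Phi }]$, is sidestepped by the truncation above, which in turn shows a posteriori that $e^{\Phi }\in L^{1}(\pr )$.
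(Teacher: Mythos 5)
Your proof is correct and follows the same overall architecture as the paper's: reduce to bounded $f$ by truncation (the paper only truncates from above and uses monotone convergence, but your two-sided truncation with dominated convergence plus Fatou works equally well), time-rescale via $\beta _{r}=t^{-1/2}\br _{tr}$ so that the quadratic penalty acquires the correct constant, apply \pref{;prestate} as an \emph{equality} to the left-hand side, and then dominate each term of the resulting supremum by $\log \ex \!\left[ e^{f(\br _{1})}\right] $. The one place you genuinely diverge is the final domination step: to realize the smoothing $P_{1-t}$ you adjoin an independent Brownian motion $\eta $ and form $\tilde{\br }=\sqrt{t}\,\beta +\sqrt{1-t}\,\eta $, which forces you to invoke the lower bound \pref{;plb} for a drift adapted to the enlarged filtration $\calF ^{\beta }\vee \calF ^{\eta }$ --- a situation not literally covered by \pref{;plb}, whose drifts are $\{ \calF ^{\br }_{t}\} $-progressively measurable for the Brownian motion being drifted. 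You correctly identify this and the fix is routine (the Girsanov argument behind \lref{;lleh1} only needs $\tilde{\br }$ to be a Brownian motion for the filtration carrying the drift), but the paper avoids the issue entirely: since $\br _{t}+\int _{0}^{1}\v _{s}\,ds$ is $\calF ^{\br }_{t}$-measurable and hence independent of the increment $\br _{1}-\br _{t}$, Fubini's theorem gives $\ex \bigl[ (P_{1-t}f)\bigl( \br _{t}+\int _{0}^{1}\v _{s}\,ds\bigr) \bigr] =\ex \bigl[ f\bigl( \br _{1}+\int _{0}^{1}\v _{s}\,ds\bigr) \bigr] $ --- the fresh randomness is already present in the original Brownian motion --- and the inclusion $\A _{1}(W)\subset \A _{1}(\br )$ then lets \pref{;prestate} itself, applied to $f$ and $\br $, supply the upper bound with no filtration enlargement. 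Your route is sound but would need an explicit remark extending \lref{;lleh1}; the paper's stays entirely within the stated framework.
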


We give a proof of the proposition via \pref{;prestate}. To this end, 
given $f\in L^{1}(\ga )$, we set 
\begin{align*}
 g(t,x):=\ex \!\left[ 
 f(\br _{1}-\br _{t}+x)
 \right] ,\quad 0\le t\le 1,\ x\in \R ^{d}, 
\end{align*}
so that 
\begin{align}\label{;condex}
 \ex \!\left[ 
 f(\br _{1})\mid \calF ^{\br }_{t}
 \right] =g(t,\br _{t})\quad \text{a.s.}
\end{align}
for every $0\le t\le 1$.

\begin{proof}[Proof of \pref{;preHC}]
By appealing to the monotone convergence theorem, it suffices 
to prove \eqref{;reHC} when $f\in L^{1}(\ga )$ is bounded from 
above. Fix $0<t\le 1$ and set 
\begin{align*}
 W_{s}:=\frac{1}{\sqrt{t}}\br _{ts}, && 
 \calF ^{W}_{s}:=\sigma (W_{u},0\le u\le s)\vee \mathcal{N},
\end{align*}
for $0\le s\le 1$, so that $W=\{ W_{s}\} _{0\le s\le 1}$ is a 
standard $d$-dimensional $\{ \calF ^{W}_{s}\} $-Brownian motion.
Note that $\sqrt{t}W_{1}=\br _{t}$ and 
$\calF ^{W}_{1}=\calF ^{\br }_{t}$ by definition. Moreover, 
as $g(t,\br _{t})$ is integrable in view of \eqref{;condex}, 
the function $t^{-1}g\bigl( t,\sqrt{t}x\bigr) ,\,x\in \R ^{d}$, fulfills 
assumption \thetag{B} since we have assumed that $f$ is bounded 
from above. Therefore, noting \eqref{;condex} again, we may 
apply \pref{;prestate} to $t^{-1}g\bigl( t,\sqrt{t}W_{1}\bigr) $ to 
rewrite the left-hand side of \eqref{;reHC} as 
\begin{equation}\label{;rewrite}
\begin{split}
 &t\log \ex \!\left[ 
 \exp \left\{ 
 t^{-1}g\bigl( t,\sqrt{t}W_{1}\bigr) 
 \right\} 
 \right] \\
 &=t\sup _{\v \in \A _{1}(W)}
 \ex \!\left[ 
 t^{-1}g\left( 
 t,\sqrt{t}W_{1}+\sqrt{t}\int _{0}^{1}\v _{s}\,ds
 \right) -\frac{1}{2}\int _{0}^{1}|\v _{s}|^{2}\,ds
 \right] \\
 &=\sup _{\v \in \A _{1}(W)}
 \ex \!\left[ 
 g\left( 
 t,\sqrt{t}W_{1}+\int _{0}^{1}\v _{s}\,ds
 \right) -\frac{1}{2}\int _{0}^{1}|\v _{s}|^{2}\,ds
 \right] \\
 &=\sup _{\v \in \A _{1}(W)}
 \ex \!\left[ 
 f\left( \br _{1}+\int _{0}^{1}\v _{s}\,ds\right) 
 -\frac{1}{2}\int _{0}^{1}|\v _{s}|^{2}\,ds
 \right] .
\end{split}
\end{equation}
Here the second equality follows from the equivalence 
$\sqrt{t}\v \in \A _{1}(W) \iff \v \in \A _{1}(W)$; 
for the third, by recalling the definition of $g$, and 
by noting that the random variables 
\begin{align*}
 \sqrt{t}W_{1}+\int _{0}^{1}\v _{s}\,ds,\quad 
 \int _{0}^{1}|\v _{s}|^{2}\,ds
\end{align*}
are independent of $\br _{1}-\br _{t}$ because they are 
$\calF ^{\br }_{t}$-measurable by the definition of $W$, 
the boundedness of $f$ from above allowed us to 
apply Fubini's theorem. Due to the obvious 
inclusion $\A _{1}(W)\subset \A _{1}(\br )$, the last expression in 
\eqref{;rewrite} is dominated by 
\begin{align*}
 \sup _{\v \in \A _{1}(\br )}\ex \!\left[ 
 f\left( \br _{1}+\int _{0}^{1}\v _{s}\,ds\right) 
 -\frac{1}{2}\int _{0}^{1}|\v _{s}|^{2}\,ds
 \right] ,
\end{align*}
and hence, in virtue of \pref{;prestate} again, by 
$\log \ex \!\left[ e^{f(\br _{1})}\right] $. This proves \eqref{;reHC}.
\end{proof}

\begin{rem}
We may start the proof with bounded measurable functions 
by truncating $f$ as $(f\wedge M)\vee (-N)$ for $M,N>0$. Then 
repeated use of the monotone convergence theorem 
as $N\to \infty $ and then as $M\to \infty $ completes the 
proof. The essential part of the above proof is how Borell's 
formula applies to \eqref{;reHC}.
\end{rem}

By \cite[Proposition~4]{be}, the exponential hypercontractivity 
\eqref{;eHC} is equivalent to the Gaussian logarithmic Sobolev 
inequality in $\R ^{d}$: for any weakly differentiable function $f$
in $L^{2}(\gss{})$ with $|\nabla f|\in L^{2}(\gss{})$, 
\begin{align}\label{;lsi}
 \int _{\R ^{\D }}|f|^{2}\log |f|\,d\gss{}
 \le \nt{|\nabla f|}{L^{2}(\ga )}^{2}
 +\nt{f}{L^{2}(\ga )}^{2}\log \nt{f}{L^{2}(\ga )}; 
\end{align}
we also refer to \cite[Subsection~A.1]{har18} 
in this respect. It is known \cite[Section~3]{bl} that 
the Pr\'ekopa--Leindler inequality implies the logarithmic 
Sobolev inequality; the above exploration provides another path 
from formula~\eqref{;vrr} to \eqref{;lsi}.
\bigskip 

\noindent 
{\bf Acknowledgements.}  The authors are grateful to 
Professor Shigeki Aida for bringing Section~8.1 of \cite{str} 
to their attention as referred to in \rref{;rptmain1}\thetag{2}. 
Their thanks also go to the anonymous referee of \cite{har18}, one 
of whose comments motivated them to do the study in \sref{;sappl}. 
The first author has been supported in part by JSPS KAKENHI 
Grant Number~17K05288.


\end{document}